\documentclass[11pt]{article}
\usepackage{amsmath, amssymb, amsthm, mathtools}
\usepackage{geometry}
\usepackage[colorlinks=true,linkcolor=blue,citecolor=blue,urlcolor=blue]{hyperref}
\usepackage{graphicx}

\newtheorem{theorem}{Theorem}[section]
\newtheorem{lemma}[theorem]{Lemma}
\newtheorem{proposition}[theorem]{Proposition}

\newtheorem{definition}[theorem]{Definition}
\newtheorem{remark}[theorem]{Remark}

\DeclareMathOperator{\Arg}{Arg}

\DeclareMathOperator{\cotan}{cot}

\title{On the Spectral Region of $4$-Cycle Stochastic Matrices}
\author{Brando Vagenende, Brecht Verbeken, Andres Algaba, Marie-Anne Guerry \footnote{Department Business Technology and Operations, Data Analytics Laboratory, Vrije Universiteit Brussel (VUB), Pleinlaan 2, Brussels, 1050, Belgium (brando.vagenende@vub.be,
brecht.verbeken@vub.be, andres.algaba@vub.be, marie-anne.guerry@vub.be).}}
\date{}
\begin{document}

\maketitle

\begin{abstract}
We study the spectrum of $4$-cycle row-stochastic matrices.
For real eigenvalues the spectral region is \([-1,1]\). For nonreal eigenvalues $\lambda=a+ib$ we derive necessary conditions in terms of the real and imaginary parts, including the inequality $a+|b|\le 1$ and the condition $(b^2+a^2+a)^2+2a^2-b^2\ge 0$.
We also prove conversely that every point in the corresponding interior region occurs as an eigenvalue of a $4$-cycle matrix. The proof is organized through a reformulation of the characteristic equation, an argument parametrization, a convex-analytic criterion, and explicit boundary constructions. Hence, the spectral region for the $4$-cycle row-stochastic matrices is exactly and explicitly determined.
\end{abstract}

{\textbf{Keywords:}} Nonnegative matrices, stochastic matrices, cycle stochastic matrices, eigenvalue region. 

{\textbf{AMS subject classifications:}} 15A18, 15B51

\section{Introduction}

A stochastic matrix is a matrix with nonnegative entries whose row sums are one. Such matrices are fundamental in the study of finite-state Markov chains, where they represent the transition probabilities between states of a stochastic process \cite{Bartholomew}. The eigenvalues of a stochastic matrix contain important information about the dynamics of the underlying process, including its long-term behaviour, stationary distributions, and convergence rates \cite{pillai2005perron, delbianco2023markov, racoceanu1995new, meyer2000applied}.

A classical problem in matrix theory is to describe the set of all possible eigenvalues of stochastic matrices. The general answer is given by the Karpelevich theorem~\cite{Karpelevich} and a lot of extended research is known \cite{munger2024demystifying, kirkland2022stochastic}. In contrast, the spectral properties of subclasses of stochastic matrices remain far less understood, leaving substantial scope for further investigation.

Doubly stochastic matrices, which form a distinguished subclass of stochastic matrices characterized by unit row and column sums, have been studied extensively from a spectral perspective. Although a number of important results concerning their eigenvalue regions have been obtained, many questions remain open. The literature contains several conjectures and partial results aimed at describing these regions more completely \cite{mashreghi2007conjecture, kim2022conjectures}. Similar investigations have been carried out for other classes of structured matrices, such as monotone stochastic matrices \cite{VagenendeVerbekenGuerry2026}, Metzler matrices \cite{domka2022spectrum} and Leslie matrices \cite{kirkland1992eigenvalue}.

In this paper, we consider another stochastic subset, namely the \(n\)-cycle stochastic matrices. Such matrices are row-stochastic and have the combinatorial structure of a directed $n$-cycle with self-loops. Research about this type of graphs can be found in e.g. \cite{gerbner2018number}. Especially, this paper focuses on the $4$-cycle matrices considered by Ran and Teng~\cite{ran2024nonnegative}. In that paper, the authors obtained complete results for the $3\times 3$ case and formulated, for the $4$-cycle stochastic matrices, a conjectural description of the nonreal spectral region. The present paper proves that conjecture and extends it.

Our proof is inspired by the trigonometric method introduced by Dmitriev and Dynkin~\cite{Dynkin} for the low-dimensional Karpelevich regions $\Theta_n$. The method starts from a multiplicative constraint obtained from the characteristic equation, reparametrizes this constraint using arguments of complex numbers, and then reduces the spectral question to a trigonometric optimization problem governed by convexity and majorization. For the present matrices, this approach leads to a particularly transparent organization of the proof.

The proof conists of three main parts. First, the characteristic equation is rewritten as an exact multiplicative relation. Second, this relation is converted into an equivalent equation involving arguments and logarithms of moduli, which yields a convex-analytic existence criterion. Third, that criterion is analyzed to obtain the necessary inequalities, the realizing matrices for the boundaries, and the converse construction for every point in the strict interior of the spectral region.

For transparency, we also record that the proof was developed with partial assistance from a large language model. A separate paper documents that process, including the iterative generate-referee-repair workflow and the role of human verification in closing the correctness-critical steps~\cite{verbeken2026early}. We mention this here only as a disclosure. The purpose of the present paper is the mathematical result itself and a self-contained proof.

\section{Background and context}

Dmitriev and Dynkin~\cite{Dynkin} characterized the Karpelevich regions $\Theta_n$ for small values of $n$ by reformulating the characteristic equation in \(\lambda\) by introducing an argument parametrization $u_i=\Arg(\lambda - \alpha_i)$ for \(i \in \{1, \ldots, n\}\). This transforms the problem of finding the boundary of the eigenvalue region into a trigonometric optimization problem, where convexity and majorization can be used. The same pattern underlies our proof.

Ran and Teng~\cite{ran2024nonnegative} studied spectral regions for row-stochastic matrices with prescribed zero patterns. One can verify that for $\alpha_1,\alpha_2,\alpha_3\in[0,1)$, the $3$-cycle stochastic matrices
\[
\begin{pmatrix}
\alpha_1 & 1-\alpha_1 & 0 \\
0 & \alpha_2 & 1-\alpha_2 \\
1-\alpha_3 & 0 & \alpha_3\\
\end{pmatrix}
\]
have the same spectral region as the set of matrices
\[
\begin{pmatrix}
\alpha_1 & 0 & 1-\alpha_1\\
1-\alpha_2 & \alpha_2 & 0 \\
0 & 1-\alpha_3 & \alpha_3\\
\end{pmatrix}.
\]
The spectral region of the latest set of matrices is fully determined in \cite[Proposition 10]{ran2024nonnegative}.

Further, for the $4$-cycle matrices as in Definition \ref{def:4-cycle}, Ran and Teng conjectured that the nonreal spectral region is determined by the Karpelevich constraint $x+|y|\le 1$ together with the algebraic boundary
\[
(y^2+x^2+x)^2+2x^2-y^2=0.
\]
We prove in Theorem~\ref{thm:main} this conjecture of Ran en Teng.

\section{The $4$-cycle matrices and the spectral region}

\begin{definition}\label{def:4-cycle}
For $\alpha_1,\alpha_2,\alpha_3,\alpha_4\in[0,1)$, the $4$-cycle stochastic matrix is defined as
\[
A(\alpha_1,\alpha_2,\alpha_3,\alpha_4)=
\begin{pmatrix}
\alpha_1 & 1-\alpha_1 & 0 & 0\\
0 & \alpha_2 & 1-\alpha_2 & 0\\
0 & 0 & \alpha_3 & 1-\alpha_3\\
1-\alpha_4 & 0 & 0 & \alpha_4
\end{pmatrix}.
\]
\end{definition}
\begin{theorem}[Main theorem]\label{MainTheorem}
For the $4$-cycle stochastic matrices the spectral region is
\[\{a+ib|0\le a < 1, a+|b|\le 1, G(a,b) \geq 0\} \cup [-1, 1].\]
\end{theorem}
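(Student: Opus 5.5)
The plan is to work directly with the characteristic polynomial. Expanding the determinant along the cycle structure gives
\[
\det(\lambda I-A(\alpha_1,\alpha_2,\alpha_3,\alpha_4))=\prod_{i=1}^4(\lambda-\alpha_i)-\prod_{i=1}^4(1-\alpha_i),
\]
so $\lambda$ is an eigenvalue if and only if $\prod_i(\lambda-\alpha_i)=\prod_i(1-\alpha_i)$ for some $\alpha_i\in[0,1)$. Throughout, $G(a,b)=(b^2+a^2+a)^2+2a^2-b^2$, as in the conjecture of Ran and Teng.

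\textbf{Real part.} Take all $\alpha_i=\alpha$. The eigenvalues are $\alpha+(1-\alpha)\omega$ with $\omega^4=1$. The choice $\omega=-1$ gives $2\alpha-1$, which sweeps $[-1,1)$, and $1$ is always an eigenvalue. Conversely, every eigenvalue of a stochastic matrix has modulus at most $1$, so the real part of the region is $[-1,1]$.

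\textbf{Nonreal part: reparametrization.} By conjugation symmetry I may assume $b>0$. Put $u_i=\Arg(\lambda-\alpha_i)\in(0,\pi)$. As $\alpha_i$ runs over $[0,1)$, $u_i$ increases strictly from $\theta_0=\Arg\lambda$ towards $\theta_1=\Arg(\lambda-1)$. The right-hand side of the characteristic relation is positive and $\sum u_i\in(0,4\pi)$, so the argument condition is exactly $\sum u_i=2\pi$.

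If $a<0$, then $\theta_0>\pi/2$ and $\sum u_i>2\pi$, which is impossible. This gives the constraint $a\ge 0$. The constraint $a<1$ holds because $\alpha_i<1$.

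From $\alpha=a-b\cot u$ one gets $|\lambda-\alpha|=b/\sin u$ and $1-\alpha=1-a+b\cot u$. Hence the modulus condition reads $\sum_i f(u_i)=0$, where
\[
f(u)=\log b-\log\big((1-a)\sin u+b\cos u\big)=\log b-\log\big(R\sin(u+\varphi)\big),
\]
with $R=\sqrt{(1-a)^2+b^2}$ and $\varphi$ the corresponding phase. This $f$ is convex on $[\theta_0,\theta_1)$ and tends to $+\infty$ as $u\to\theta_1$.

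\textbf{Existence criterion.} The feasible set $\{u\in[\theta_0,\theta_1)^4:\sum u_i=2\pi\}$ is convex, hence connected. By the intermediate value theorem, $\lambda$ is an eigenvalue if and only if $\min\sum f\le 0\le\sup\sum f$ on this set.

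\textbf{Lower side.} Convexity and symmetry (Jensen) put the minimum at $u_i=\pi/2$, which is feasible since $\theta_0\le\pi/2<\theta_1$. At $u=\pi/2$ the minimum equals $4\log\big(b/(1-a)\big)$. The condition $\min\sum f\le0$ is therefore exactly $b\le 1-a$, i.e. $a+|b|\le1$. Equality is attained by $\alpha_i=a$ for all $i$, which is the explicit construction for the Karpelevich edge.

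\textbf{Upper side.} A convex function on this box-constrained simplex approaches its supremum at vertices: all but one coordinate sits at an endpoint. There are two cases.
\begin{itemize}
\item If $3\theta_0+\theta_1\le 2\pi$ fails to block pushing one coordinate to $\theta_1$, then $\sup=+\infty$ and there is no constraint.
\item Otherwise the extremal configuration is $u_1=u_2=u_3=\theta_0$, i.e. $\alpha_1=\alpha_2=\alpha_3=0$, with the fourth coordinate fixed by the sum.
\end{itemize}
In the second case it is cleaner to return to the polynomial. The relation becomes $\lambda^3(\lambda-\alpha)=1-\alpha$, that is
\[
\alpha=\frac{\lambda^4-1}{\lambda^3-1}.
\]
I will then show that the boundary of the reachable set is the locus where this $\alpha$ is real and lies at the endpoint value determined by the sum constraint. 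Clearing denominators, the condition $\operatorname{Im}\big((\lambda^4-1)\overline{(\lambda^3-1)}\big)=0$, after factoring out $b$ and the trivial factors, should reduce to $G(a,b)=0$. The sign of $\sum f$ at this vertex should then match the sign of $G$. These same matrices, with three zero diagonal entries, realize the curve $G=0$ and serve as the explicit boundary construction.

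\textbf{Main obstacle.} I expect the hardest step to be this last algebraic identification: proving that the vertex value $3\log|\lambda|+f(2\pi-3\theta_0)$ is nonnegative exactly when $G(a,b)\ge0$. This requires tracking which of the two upper-side cases occurs in which part of the $(a,b)$ region, and checking that $\alpha$ from the formula lies in $[0,1)$ on the relevant arc. I would first attempt a direct factorization of $\operatorname{Im}\big((\lambda^4-1)\overline{(\lambda^3-1)}\big)$ in $a,b$. If needed, I would fall back to a monotonicity argument in $\theta_0$ along rays from the origin.
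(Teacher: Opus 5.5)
Your framework is the paper's. Your $f$ is its $F$ (with $\theta_0=m$, $\theta_1=M$), the Jensen step at $u_i=\pi/2$ gives $a+b\le 1$, and the dichotomy $3\theta_0+\theta_1\le 2\pi$ versus $>2\pi$, with extremal point a permutation of $(2\pi-3\theta_0,\theta_0,\theta_0,\theta_0)$, is Lemmas~\ref{lem:unbounded} and~\ref{lem:tight}. (Small fix: $a<1$ does not follow from $\alpha_i<1$ as stated. It follows from $u_i<\theta_1$, so $4\theta_1>2\pi$, or from $|\lambda|\le 1$.)

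However, the proposal stops exactly where the necessity of $G\ge 0$ has to be proved. Your factorization is correct: $\operatorname{Im}\bigl((\lambda^4-1)\overline{(\lambda^3-1)}\bigr)=b\bigl(|\lambda|^6-N\bigr)=b\,|\lambda-1|^2G(a,b)$ with $N=4a^3-3a^2-4ab^2+b^2$, which is the identity the paper uses. By itself it only locates the zero set of the vertex value $V=3\log|\lambda|+f(2\pi-3\theta_0)$, not its sign. That part is easily repaired. With $\alpha_4=a-b\cot(2\pi-3\theta_0)\in[0,1)$, the number $\lambda^3(\lambda-\alpha_4)$ is positive real, so $(\lambda^4-1)-\alpha_4(\lambda^3-1)=(1-\alpha_4)(e^{V}-1)$. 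Multiplying by $\overline{\lambda^3-1}$ and taking imaginary parts gives $(1-\alpha_4)(e^{V}-1)(b^2-3a^2)=|\lambda-1|^2G(a,b)$. Moreover $b^2>3a^2$, since $2\pi-3\theta_0<\theta_1<\pi$ forces $\theta_0>\pi/3$. (The paper instead computes $e^{V}=|\lambda|^6/N$ via triple-angle formulas.)

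The genuine gap is the regime tracking you postpone. In the regime $3\theta_0+\theta_1\le 2\pi$, your own criterion declares every point with $0\le a<1$, $a+b\le 1$ an eigenvalue. So the theorem holds only if $G<0$ never occurs there. Nothing in the proposal establishes this, and ``monotonicity along rays'' is not an argument. The paper proves the stronger statement $G\le 0\Rightarrow 3\theta_0+\theta_1>2\pi$ in Lemma~\ref{lem:GimpliesTight}. There, $G\le0$ forces $0\le a\le 1/6$ and $b^2\ge s_-(a)$, and $s_-(a)$ exceeds both $3a^2$ and the zero $s_0(a)=a^2(3-4a)/(1-4a)$ of $N$ in $s=b^2$; the latter uses $(1-6a+16a^3)^2-(1-4a)^2(2a+1)(1-6a)=256a^6$.

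Within your setup there is a shorter closing. On the interface $3\theta_0+\theta_1=2\pi$ one has $\theta_0\in(\pi/3,\pi/2)$ and $\tan 3\theta_0=b/(1-a)$, which is exactly $N=0$, so your identity gives $G=|\lambda|^6/|\lambda-1|^2>0$ there. The set $\{a\ge 0,\ b>0,\ G\le 0\}$ lies between the graphs $b^2=s_\pm(a)$ over $[0,1/6]$, hence is connected. It contains $\lambda=i/2$, which lies in the tight regime, so it never meets the other regime.

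Once both steps are supplied, your intermediate-value criterion gives sufficiency on all of $\mathcal R$ at once, boundary included. That is more economical than the paper's star-convexity and ray-intersection argument (Lemma~\ref{lem:starconvex} and Theorem~\ref{thm:interior}). The paper's route, in exchange, produces explicit realizing matrices $(1-\ell)I+\ell A_L(\alpha)$.
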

The spectral region specified in Theorem \ref{MainTheorem} is visualized in Figure \ref{spectral_region}.

Firstly, we study the real spectral region in Theorem \ref{prop:real-eigenvalues}, followed by the nonreal spectral region in Theorem \ref{thm:main}.

\begin{theorem}[Main theorem, real case]\label{prop:real-eigenvalues}
The real spectral region of the $4$-cycle stochastic matrices is the interval $[-1,1]$.
\end{theorem}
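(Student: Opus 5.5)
Two inclusions need to be shown: every real eigenvalue of $A(\alpha_1,\alpha_2,\alpha_3,\alpha_4)$ lies in $[-1,1]$, and every $t\in[-1,1]$ occurs as an eigenvalue of some $4$-cycle matrix.

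The first inclusion is immediate. Each $A(\alpha_1,\dots,\alpha_4)$ is nonnegative and row-stochastic, so its spectral radius equals $1$. Indeed, $\|A\|_\infty=1$ bounds the spectral radius by $1$, and $A\mathbf{1}=\mathbf{1}$ shows it is attained. Hence every eigenvalue satisfies $|\lambda|\le 1$, and every real eigenvalue lies in $[-1,1]$.

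For the converse, we compute the characteristic polynomial directly from the cyclic structure. Expanding $\det(\lambda I-A)$, the only permutations that contribute are the identity and the full $4$-cycle, which gives
\[
\det(\lambda I - A)=\prod_{i=1}^4(\lambda-\alpha_i)-\prod_{i=1}^4(1-\alpha_i).
\]
The sign of the second term should be checked: the $4$-cycle is an odd permutation, and the entries of $\lambda I-A$ along it are $-(1-\alpha_i)$, so it contributes $-\prod_i(1-\alpha_i)$. Consequently $\lambda$ is an eigenvalue exactly when $\prod_i(\lambda-\alpha_i)=\prod_i(1-\alpha_i)$. Two realizations now cover the whole interval.

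\emph{Case $t\in[0,1)$.} Take $\alpha_1=t$ and $\alpha_2=\alpha_3=\alpha_4$ arbitrary in $[0,1)$. The matrix is upper triangular except for the corner entry $1-\alpha_4$. The left-hand side vanishes at $\lambda=t$, but the right-hand side $\prod_i(1-\alpha_i)$ is positive, so this choice does not make $t$ an eigenvalue. The fix is to take all $\alpha_i$ equal to $\alpha$. Then the equation becomes $(\lambda-\alpha)^4=(1-\alpha)^4$, whose roots are $\lambda=\alpha+(1-\alpha)\omega$ with $\omega\in\{1,i,-1,-i\}$. The real roots are $1$ and $2\alpha-1$. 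As $\alpha$ ranges over $[0,1)$, the value $2\alpha-1$ sweeps $[-1,1)$, and $1$ is always an eigenvalue. Together these already realize all of $[-1,1]$.

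This single family therefore settles the converse, and no step is a serious obstacle. The one point to double-check is the sign in the characteristic polynomial. If that sign were wrong, the fallback would be to verify directly the explicit eigenvector of $A(\alpha,\alpha,\alpha,\alpha)$ for $\lambda=2\alpha-1$, namely the alternating vector $(1,-1,1,-1)^T$. A direct check gives $A(1,-1,1,-1)^T=(2\alpha-1)(1,-1,1,-1)^T$, which confirms the claim independently of the determinant computation.
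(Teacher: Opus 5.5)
Your proof is correct and follows the paper's argument: the inclusion in $[-1,1]$ comes from the spectral radius of a row-stochastic matrix being at most $1$, and the converse uses the same equal-parameter family $\alpha_1=\alpha_2=\alpha_3=\alpha_4=\alpha$ (the paper writes $\alpha=1-x$ and $A=(1-x)I+xP$), whose real eigenvalues are $1$ and $2\alpha-1$. You should delete the abandoned first attempt with $\alpha_1=t$ under ``Case $t\in[0,1)$'', since the equal-parameter family on its own covers all of $[-1,1]$.
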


\begin{proof}
Let $\lambda$ be a real eigenvalue of
$A(\alpha_1,\alpha_2,\alpha_3,\alpha_4)$. Since $A$ is row-stochastic and
nonnegative, its spectral radius is at most $1$. Hence
\[
|\lambda|\le 1,
\]
so every real eigenvalue lies in $[-1,1]$.

Conversely, since \(r=1\) is always an eigenvalue of a stochastic matrix, let $r\in[-1,1)$. Choose
\[
x=\frac{1-r}{2}\in(0,1],
\]
and set
\[
\alpha_1=\alpha_2=\alpha_3=\alpha_4=1-x.
\]
Then
\[
A=
\begin{pmatrix}
1-x & x & 0 & 0\\
0 & 1-x & x & 0\\
0 & 0 & 1-x & x\\
x & 0 & 0 & 1-x
\end{pmatrix}
=(1-x)I+xP,
\]
where
\[
P=
\begin{pmatrix}
0&1&0&0\\
0&0&1&0\\
0&0&0&1\\
1&0&0&0
\end{pmatrix}.
\]
The eigenvalues of $P$ are
\[
1,\quad i,\quad -1,\quad -i.
\]
Therefore the eigenvalues of $A$ are
\[
1,\qquad 1-x+ix,\qquad 1-2x,\qquad 1-x-ix.
\]
In particular,
\[
1-2x=r.
\]
Thus every $r\in[-1,1)$ occurs as a real eigenvalue.
\end{proof}

Secondly, for nonreal eigenvalues, we now state Theorem \ref{thm:main}, which resolves Conjecture~20 of Ran and Teng~\cite{ran2024nonnegative} for the $4$-cycle stochastic matrices. Both the real and nonreal case are visualized in Figure \ref{spectral_region}.

\begin{figure}[htbp]
\centering
\includegraphics[width=0.5\textwidth]{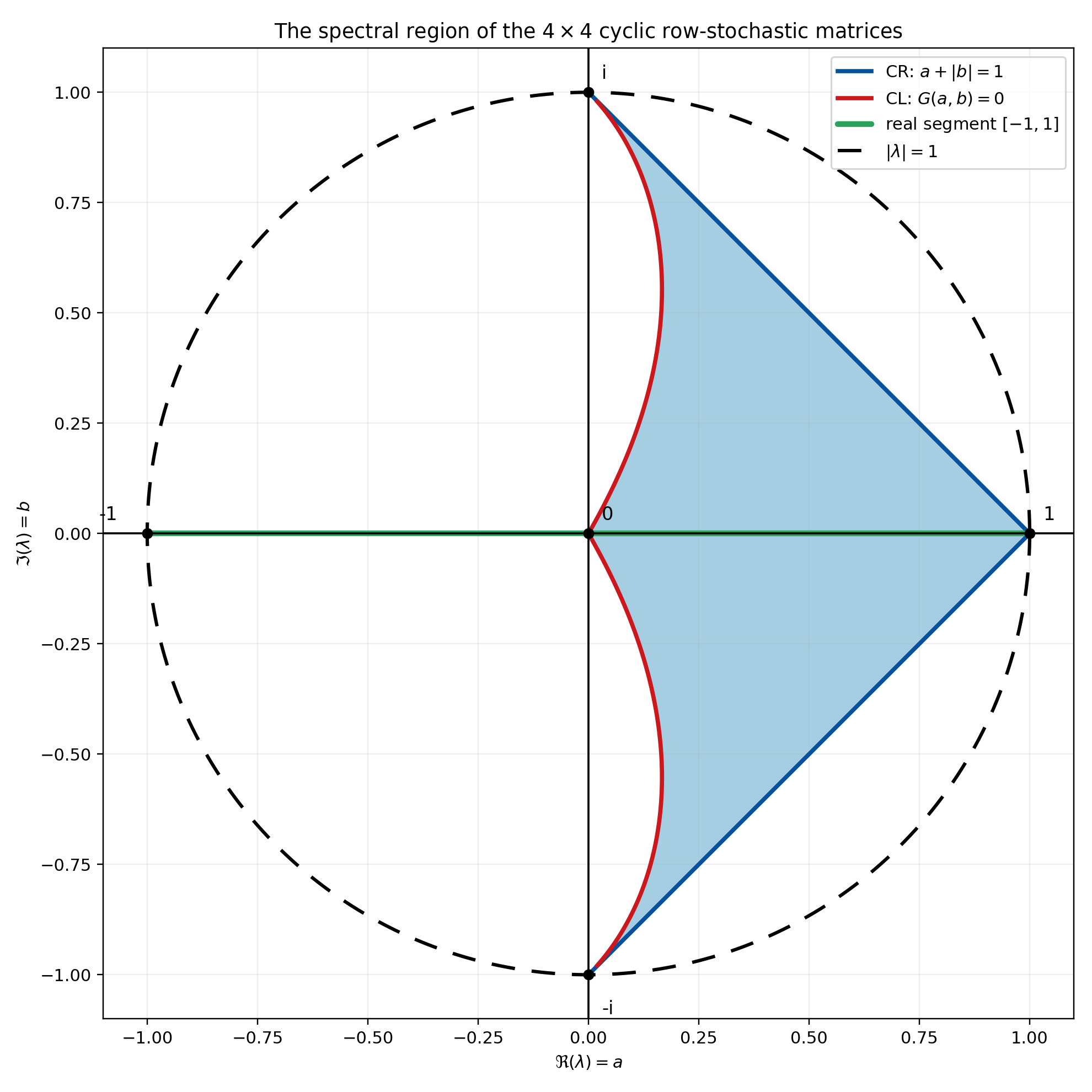}
\caption{The spectral region of the \(4\)-cycle stochastic matrices, where $G(a,b)=(b^2+a^2+a)^2+2a^2-b^2$.}
\label{spectral_region}
\end{figure}

\begin{definition}\label{def:G}
We define the function
\begin{equation}\label{Eq:G(a,b)}
G(a,b):=(b^2+a^2+a)^2+2a^2-b^2.
\end{equation}
\end{definition}
and introduce the set
\begin{equation}\label{spectral region as set}
\mathcal{R}:=\{a+ib|0\le a < 1, a+|b|\le 1, G(a,b) \geq 0\}
\end{equation}
In what follows, we prove that $\mathcal{R}$ is the nonreal spectral region for the $4$-cycle matrices.

\begin{theorem}[Main theorem, nonreal case]\label{thm:main}
Consider the $4$-cycle stochastic matrices
\[
A(\alpha_1,\alpha_2,\alpha_3,\alpha_4)=
\begin{pmatrix}
\alpha_1 & 1-\alpha_1 & 0 & 0\\
0 & \alpha_2 & 1-\alpha_2 & 0\\
0 & 0 & \alpha_3 & 1-\alpha_3\\
1-\alpha_4 & 0 & 0 & \alpha_4
\end{pmatrix},
\qquad \alpha_1,\alpha_2,\alpha_3,\alpha_4\in[0,1).
\]
Let $\lambda=a+ib\in\sigma(A(\alpha_1,\alpha_2,\alpha_3,\alpha_4))$ with $b\neq 0$.
Then:
\begin{enumerate}
\item $0\le a < 1$.
\item $a+|b|\le 1$.
\item $G(a,b) \geq 0$

\end{enumerate}
Conversely, if $b>0$ and
\[
0\le a < 1,\qquad a+b<1,\qquad G(a,b)>0,
\]
then there exist $\alpha_1,\alpha_2,\alpha_3,\alpha_4\in[0,1)$ such that $\lambda=a+ib\in\sigma(A(\alpha_1,\alpha_2,\alpha_3,\alpha_4))$.
The lower half-plane follows by conjugation.

Moreover:
\begin{itemize}
\item The segment $CR:\ \lambda=1-x+ix\ (x\in[0,1])$ is realized by the matrix with $\alpha_1=\alpha_2=\alpha_3=\alpha_4=1-x$.
\item The curve portion $CL$ in the upper half-plane joining $i$ to $0$ defined by $G(a,b)=0$
is realized (except for \(0\) which is a limit point) by
\[
A_L(\alpha)=
\begin{pmatrix}
\alpha & 1-\alpha & 0 & 0\\
0 & 0 & 1 & 0\\
0 & 0 & 0 & 1\\
1 & 0 & 0 & 0
\end{pmatrix},
\qquad \alpha\in[0,1).
\]
\item Every point strictly inside $\{b>0,\ 0\le a < 1,\ a+b<1,\ G(a,b)>0\}$ is also realized as an eigenvalue of a $4$-cycle stochastic matrix.
\end{itemize}
\end{theorem}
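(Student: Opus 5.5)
The starting point is the characteristic polynomial. Expanding along the cyclic structure gives
\[
\det(\lambda I-A)=\prod_{i=1}^4(\lambda-\alpha_i)-\prod_{i=1}^4(1-\alpha_i),
\]
so $\lambda$ is an eigenvalue if and only if $\prod_i(\lambda-\alpha_i)=\prod_i(1-\alpha_i)$. The right-hand side is a positive real number. For $\lambda=a+ib$ with $b>0$ (the case $b<0$ follows by conjugation), put $u_i=\Arg(\lambda-\alpha_i)\in(0,\pi)$. The argument part of the equation forces $\sum_i u_i\equiv 0 \pmod{2\pi}$ with $\sum_i u_i\in(0,4\pi)$, hence $\sum_i u_i=2\pi$. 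This settles item 1 directly:
\begin{itemize}
\item If $a<0$, then every $\lambda-\alpha_i$ has negative real part, so each $u_i>\pi/2$ and $\sum_i u_i>2\pi$.
\item If $a\ge 1$, then each $u_i<\pi/2$ and $\sum_i u_i<2\pi$.
\end{itemize}

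Next I reparametrize by the angles. From $\alpha_i=a-b\cot u_i$ we get $|\lambda-\alpha_i|=b/\sin u_i$ and $1-\alpha_i=1-a+b\cot u_i$. The modulus equation then becomes
\[
\sum_{i=1}^4 f(u_i)=0,\qquad f(u)=\log\frac{(1-a)\sin u+b\cos u}{b}.
\]
The constraint $\alpha_i\ge 0$ means $u_i\ge u_0:=\operatorname{arccot}(a/b)=\Arg\lambda$. The constraint $\alpha_i<1$ means $u_i<\pi-\theta$, where $\theta=\Arg(1-a+ib)$. On this interval the numerator $(1-a)\sin u+b\cos u=r\sin(u+\theta)$ is positive, so $f$ is concave. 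Moreover $f\to-\infty$ as $u\to\pi-\theta$.

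The existence question is therefore whether $0$ lies in the range of the concave function $F(u)=\sum_i f(u_i)$ over the convex set
\[
P=\{u\in[u_0,\pi-\theta)^4:\ \textstyle\sum_i u_i=2\pi\}.
\]
Since $P$ is convex, hence connected, this range is an interval. Its endpoints are as follows:
\begin{itemize}
\item By concavity and symmetry (Jensen), the maximum is at $u_i=\pi/2$ for all $i$. There $F=4\log((1-a)/b)$, and $F\ge 0$ is exactly $a+b\le 1$ (item 2).
\item The infimum lies on the boundary of $P$, at vertex-type points where all coordinates but one sit at a bound. If $3u_0+(\pi-\theta)<2\pi$, one coordinate can be pushed to $\pi-\theta$ while the others stay feasible, so $\inf F=-\infty$.
\item Otherwise the infimum is attained where three coordinates equal $u_0$ (i.e. three $\alpha_i=0$) and the fourth is $2\pi-3u_0$.
\end{itemize}
That extreme configuration is precisely the family $A_L(\alpha)$. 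For it the eigenvalue equation is $\lambda^3(\lambda-\alpha)=1-\alpha$. Eliminating $\alpha$ (the real and imaginary parts of $\lambda^4-1=\alpha(\lambda^3-1)$ give two linear equations in $\alpha$, whose compatibility is a determinant condition) should yield, after dividing out trivial factors, the curve $G(a,b)=0$. The condition $\inf F\le 0$ should then be exactly $G(a,b)\ge 0$, giving item 3.

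The converse follows from the same picture. If $a+b<1$ and $G>0$, then $\max F>0>\inf F$, and the intermediate value theorem along a segment in $P$ gives a point with $F=0$ and every $\alpha_i\in[0,1)$. The segment $CR$ is the equal-$\alpha$ matrix already used in Theorem \ref{prop:real-eigenvalues}. The curve $CL$ comes from the $A_L$ family just described, with $\alpha\to 1$ giving the limit point $0$.

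The main obstacle is the infimum step. I must show rigorously that for a concave sum under a box and one linear constraint, the minimum sits at a point with three coordinates at the lower bound $u_0$. The competing configurations (some coordinates at the upper bound, or two at $u_0$) must be ruled out, which I would do by a majorization/Karamata argument using the monotonicity of $f'$. I also need to verify carefully that the sign of the elimination polynomial matches $\inf F\le 0$ on the correct side of the curve, including the dichotomy $3u_0+\pi-\theta\gtrless 2\pi$. The first thing to try is to show that this dichotomy coincides with the region where $G$ matters, i.e. small $a$ and $b$ near $1$.
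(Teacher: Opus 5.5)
Your necessity argument is the paper's in different notation: your $f$ is $-F$ from \eqref{functionF}, $u_0=m$ and $\pi-\theta=M$. Items 1 and 2 are correct.

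The step you flag as the main obstacle is easy. If $3m+M>2\pi$, no point of the closure of $P$ has a coordinate equal to $M$. So $P$ is a compact polytope whose vertices are exactly the permutations of $(2\pi-3m,m,m,m)$, and a strictly concave sum attains its minimum there. The paper uses Karamata instead (Lemma~\ref{lem:tight}).

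One slip: at $3u_0+(\pi-\theta)=2\pi$ that configuration has a coordinate equal to the excluded endpoint $\pi-\theta$. There the infimum is $-\infty$, so equality belongs to the unbounded case, as in Lemma~\ref{lem:unbounded}.

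The genuine gap is item~3. This is the substance of the theorem, and both decisive steps are only asserted.

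\emph{The sign step.} Eliminating $\alpha$ locates only the zero set. Indeed $\alpha=\lambda+1/(\lambda^2+\lambda+1)$ is real iff $|\lambda^2+\lambda+1|^2-(2a+1)$ vanishes, and this expression equals $G(a,b)$. It says nothing about the sign of $\min F$ off the curve, so you need the actual value. Fix $\alpha$ by $\Arg(\lambda-\alpha)=2\pi-3m$. Then $p_\alpha(\lambda)=\lambda^3(\lambda-\alpha)-(1-\alpha)$ is real, and $\min F\le 0$ iff $p_\alpha(\lambda)\ge 0$. A direct computation gives $p_\alpha(\lambda)=|\lambda-1|^2G(a,b)/(b^2-3a^2)$, and $b^2>3a^2$ holds because $m>\pi/3$ in this regime. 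This is equivalent to the paper's $\max_P\Psi=\log(|\lambda|^6/N(a,b))$ together with $|\lambda|^6-N=|\lambda-1|^2G$.

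\emph{The regime step.} This one is more serious. In the unbounded regime $\inf F=-\infty$, so every such $\lambda$ with $a+b\le 1$ \emph{is} an eigenvalue. Item~3 would then be false unless that regime lies inside $\{G>0\}$. The two curves do not coincide, since the regime boundary is $N(a,b)=0$. What you need is the inclusion $\{G\le 0\}\subseteq\{3m+M>2\pi\}$, which is Lemma~\ref{lem:GimpliesTight}. Its proof runs as follows. $G\le 0$ forces $0\le a\le 1/6$ and $b^2\ge s_-(a)>3a^2$. Then $3m+M>2\pi$ becomes $N(a,b)>0$ via $\tan 3m>b/(1-a)$. Finally $s_-(a)>s_0(a)$ reduces to a polynomial inequality whose two sides differ by $256a^6$.

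Your converse genuinely differs from the paper's, and it works once the sign step is done. $F(P)$ is an interval containing $4\log\frac{1-a}{b}\ge 0$ and containing either $\min F\le 0$ or arbitrarily negative values. So it yields the whole closed region at once, with $CR$ and $CL$ appearing as the equality cases of the max and the min.

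The paper instead uses star-convexity about $1$ (Lemma~\ref{lem:starconvex}). It takes a ray from $1$ through $\lambda$, which meets $G=0$ at a point realized by $A_L(\alpha)$. That route does not depend on the necessity computations and gives the explicit realizing matrix $(1-\ell)I+\ell A_L(\alpha)$.
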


Because all matrices under consideration are real, $\lambda\in\sigma(A)$ implies $\overline{\lambda}\in\sigma(A)$, where \(\sigma(A)\) stands for the spectrum of \(A\). Accordingly, in the remainder of the paper we may assume $b:=\operatorname{Im} \lambda>0$ and restore $|b|$ at the end. So, we restrict ourselves to \(\sigma(A)\cap H\), where \(
H:=\{z\in\mathbb C:\operatorname{Im}z>0\}
\).

The remainder of the paper is devoted to proving Theorem~\ref{thm:main}. In particular, the theorem confirms the conjectured left boundary $G(a,b)=0$ from~\cite{ran2024nonnegative} and shows that, together with the right Karpelevich boundary, it yields the full nonreal spectral region for these matrices. For the convenience of the reader, we summarize the logical structure.

\begin{remark}[Road map]\label{rem:roadmap}
The proof of Theorem~\ref{thm:main} proceeds as follows.
\begin{enumerate}
\item In Section~\ref{sec:eig-reformulation}, the characteristic equation is rewritten as a multiplicative constraint.\newline It is then parametrized in terms of arguments $u_i=\Arg(\lambda - \alpha_i)$, for \(i \in \{1, \ldots, n\}\), and a scalar function $F$, defined in (\ref{functionF}). This part is inspired by the trigonometric method introduced by Dmitriev and Dynkin~\cite{Dynkin} for the low-dimensional Karpelevich regions $\Theta_n$.
\item In Section~\ref{sec:criterion}, this leads to the exact existence criterion, namely Theorem \ref{prop:criterion}: a nonreal number $\lambda$ occurs as an eigenvalue if and only if $0$ belongs to the image of the continuous function $\Psi$ on the feasible set $P$, which are introduced in Definition \ref{def:P-Psi}.
\item In Section~\ref{sec:necessary}, the criterion is analyzed to obtain the necessary conditions $0\le a < 1$, $a+b\le 1$, and $G(a,b)\ge 0$. These results are stated in Lemma \ref{lem:a-range}, Proposition \ref{prop:right-boundary-necessary} and Proposition \ref{prop:G-condition}.
\item In Section~\ref{sec:boundary}, Proposition \ref{prop:CR} and Propsition \ref{prop:CL} give two explicit realizing matrices for respectively the boundary pieces $CR$ and $CL$.
\item In Section~\ref{sec:converse}, Lemma \ref{lem:starconvex} and Lemma \ref{Geometric intersection with the left boundary} yield the converse that every strict interior point of the eigenvalue region \(\mathcal{R}\) is realized.
\end{enumerate}
\end{remark}

\section{Reformulation of the characteristic equation}\label{sec:eig-reformulation}





\begin{proposition}[Multiplicative eigenvalue constraint]\label{prop:multconstraint}
Let $\lambda\notin\mathbb{R}$. Then $\lambda\in\sigma(A(\alpha_1,\alpha_2,\alpha_3,\alpha_4))$ if and only if
\begin{equation}\label{eq:multconstraint-new}
(\lambda-\alpha_1)(\lambda-\alpha_2)(\lambda-\alpha_3)(\lambda-\alpha_4)=(1-\alpha_1)(1-\alpha_2)(1-\alpha_3)(1-\alpha_4).
\end{equation}
\end{proposition}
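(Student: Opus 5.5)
The plan is to compute the characteristic polynomial of $A=A(\alpha_1,\alpha_2,\alpha_3,\alpha_4)$ directly and show that $\det(\lambda I-A)=0$ is exactly \eqref{eq:multconstraint-new}. The matrix $\lambda I-A$ has diagonal entries $\lambda-\alpha_i$, superdiagonal entries $-(1-\alpha_1),-(1-\alpha_2),-(1-\alpha_3)$ in positions $(1,2),(2,3),(3,4)$, the single entry $-(1-\alpha_4)$ in position $(4,1)$, and zeros elsewhere. In the Leibniz expansion of this determinant, only permutations $\sigma$ with $\sigma(i)\in\{i,i+1 \bmod 4\}$ for every $i$ contribute.

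First I will show that there are exactly two such permutations. If some $i$ has $\sigma(i)=i+1$, then position $i+1$ is already hit. So $\sigma(i+1)\neq i+1$, which forces $\sigma(i+1)=i+2$, and propagating around the cycle gives the full $4$-cycle. Hence the only permutations are the identity and the $4$-cycle $(1\,2\,3\,4)$. The identity contributes $\prod_{i=1}^4(\lambda-\alpha_i)$. The $4$-cycle has sign $(-1)^{3}=-1$ and contributes $-\prod_{i=1}^4\bigl(-(1-\alpha_i)\bigr)=-\prod_{i=1}^4(1-\alpha_i)$. Therefore
\[
\det(\lambda I-A)=\prod_{i=1}^4(\lambda-\alpha_i)-\prod_{i=1}^4(1-\alpha_i),
\]
and $\lambda\in\sigma(A)$ if and only if this quantity vanishes, which is \eqref{eq:multconstraint-new}.

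The identity above holds for every complex $\lambda$. The hypothesis $\lambda\notin\mathbb R$ is not needed for the equivalence itself. Its role is that it guarantees $\lambda-\alpha_i\neq0$ for every $i$, because the $\alpha_i$ are real. This makes the later argument parametrization $u_i=\Arg(\lambda-\alpha_i)$ well defined, and it rules out degenerate cases where both sides vanish. I will note this in the proof.

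The only point needing care is the sign bookkeeping for the cyclic term: the permutation sign combines with the four minus signs from the off-diagonal entries. As a check, I will use the matrix $(1-x)I+xP$ from Theorem~\ref{prop:real-eigenvalues}. There the identity gives $(\lambda-1+x)^4=x^4$, whose roots are $\lambda=1-x+x\,\omega$ with $\omega^4=1$. This matches the eigenvalues computed in that proof.
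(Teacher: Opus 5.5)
Your proof is correct and takes the same approach as the paper, which simply states that the characteristic equation $\det(\lambda I-A)=0$ can be written as \eqref{eq:multconstraint-new}; your Leibniz expansion (only the identity and the $4$-cycle survive, and the cyclic term contributes $-\prod_{i}(1-\alpha_i)$) supplies the computation the paper omits. One side remark is slightly off: since every $\alpha_i<1$, the right-hand side is strictly positive, so the ``both sides vanish'' degeneracy cannot occur, and the hypothesis $\lambda\notin\mathbb{R}$ plays no role in this equivalence, as you otherwise correctly note.
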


\begin{proof}

$\lambda\in\sigma(A(\alpha_1,\alpha_2,\alpha_3,\alpha_4))$ is equivalent with $\lambda$ is a solution of the characteristic equation of the matrix $A(\alpha_1,\alpha_2,\alpha_3,\alpha_4)$, which can be expressed as  (\ref{eq:multconstraint-new}).
\end{proof}

Introducing for $k\in\{1,\dots,4\}$ the notations
\begin{equation}\label{def:tk-z}
t_k=1-\alpha_k \,\,\, \,\,\, \text{and}\,\,\, z=\lambda-1
\end{equation}
let us rewrite the characteristic equation \eqref{eq:multconstraint-new} equivalent to
\begin{equation}\label{eq:zproduct-new}
(z+t_1)(z+t_2)(z+t_3)(z+t_4)=t_1t_2t_3t_4.
\end{equation}
where all $t_k \in (0,1]$.

Since $\lambda=a+ib$, we have
\[
z=x+iy,\qquad \,\,\, \text{where} \,\,\,  x=a-1 \,\,\, \text{and} \,\,\, y=b.
\]
In the necessity part we assume $y=b>0$.

\begin{definition}\label{def:uF}
For $t>0$, define $u(t)=\Arg(z+t)\in(0,\pi)$, and set
\[
m:=\Arg(z+1)=\Arg(\lambda),\qquad M:=\Arg(z)=\Arg(\lambda-1).
\]
Remark that 
\[
u(t)=\arctan\!\Big(\frac{y}{x+t}\Big)
\]
and that for $u\in[m,M)$ this relation can be expressed as
\[
t(u)=y\cotan u-x.\]
Further, let us introduce the function
\begin{equation}\label{functionF}
    F(u)=\log|z+t(u)|-\log t(u).
\end{equation}
\end{definition}

\begin{lemma}\label{lem:parametrization}
The following properties hold:
\begin{enumerate}
\item For each $u\in[m,M)$,
\[
|z+t(u)|=y\csc u.
\]
\item The map $t\mapsto u(t)$ is continuous and strictly decreasing on $(0,1]$.
\item On $(0,1]$ the map $t\mapsto u(t)$ is a bijection onto $[m,M)$.
\end{enumerate}
\end{lemma}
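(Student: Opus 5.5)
The plan is elementary plane geometry of the point $z+t=(x+t)+iy$ with $y>0$, treating the three items in order.

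For item 1, I take $u\in[m,M)$ and set $t=t(u)=y\cotan u-x$. Then
\[
x+t(u)=y\cotan u .
\]
It follows that
\[
|z+t(u)|^2=(x+t)^2+y^2=y^2(\cotan^2u+1)=y^2\csc^2u .
\]
Since $u\in(0,\pi)$ we have $\csc u>0$, and $y>0$, so taking positive square roots gives $|z+t(u)|=y\csc u$. The same computation shows that the point $z+t(u)$ equals $y\csc u\,(\cos u+i\sin u)$, so its argument is exactly $u$. Hence $t(\cdot)$ is a right inverse of $u(\cdot)$, which I will reuse in item 3.

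For item 2, the function $t\mapsto z+t$ moves the point horizontally along the line $\operatorname{Im}=y>0$, and $\Arg$ is continuous on the open upper half-plane. Composing gives continuity of $u(t)$. For strict monotonicity I would not use the $\arctan$ formula literally, since $x+t$ may change sign. Instead I use the globally valid expression
\[
\cotan u(t)=\frac{x+t}{y}\qquad\text{on }(0,\pi).
\]
Its right-hand side is strictly increasing in $t$, and $\cotan$ is strictly decreasing on $(0,\pi)$. Therefore $u(t)$ is strictly decreasing. Equivalently, $u(t)=\operatorname{arccot}((x+t)/y)$ with $\operatorname{arccot}:\mathbb R\to(0,\pi)$ decreasing.

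For item 3, I evaluate the endpoints. At $t=1$ we get $u(1)=\Arg(z+1)=\Arg(\lambda)=m$. As $t\to0^+$, continuity of $\Arg$ at $z$ (note $z\notin\mathbb R$) gives $u(t)\to\Arg(z)=M$. By strict decrease, $u(t)<M$ for every $t\in(0,1]$, because $u(t)>u(t')\to M$ is impossible. Monotonicity and continuity together with the intermediate value theorem then give image exactly $[m,M)$, and injectivity follows from strict monotonicity. The inverse is $t(u)=y\cotan u-x$ by item 1; in particular $t(u)\in(0,1]$ for $u\in[m,M)$, which retroactively justifies that the formula in item 1 lands in the domain.

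The only delicate point is the sign issue in the $\arctan$ formula, which would mis-assign branches when $x+t\le0$. Working with $\cotan$ on $(0,\pi)$ removes it.
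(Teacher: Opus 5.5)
Your proof is correct and follows the paper's argument: the same $\cotan^2 u+1=\csc^2 u$ computation for item 1, monotonicity of an explicit inverse-trigonometric expression for item 2, and endpoint values plus continuity and strict monotonicity for item 3. Replacing the paper's formula $u(t)=\arctan\bigl(y/(x+t)\bigr)$ with $\cotan u(t)=(x+t)/y$ on $(0,\pi)$ makes the argument more rigorous. The $\arctan$ formula is only literally valid when $x+t>0$, and $x+t\le 0$ does occur for $t\in(0,1]$ whenever $x=a-1<0$ and $t\le 1-a$.
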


\begin{proof}
Due to symmetric reasons we can assume that $y>0$. Therefore, $z+t=(x+t)+iy$ lies in the upper half-plane and the relation $u(t)=\Arg(z+t)$ is equivalent to
$t(u)=y\cotan u-x$. Moreover,
\[
|z+t|^2=(x+t)^2+y^2=y^2(\cotan^2 u+1)=y^2\csc^2 u,
\]
so $|z+t(u)|=y\csc u$.

Since
\[
u(t)=\arctan\!\Big(\frac{y}{x+t}\Big)
\]
 $u(t)$ is continuous and strictly decreasing in $t>0$. Therefore, restricted to $t\in(0,1]$, it is a continuous decreasing bijection onto the interval with endpoints $u(1)=m$ and $\lim_{t\downarrow 0}u(t)=M$.
\end{proof}

\section{Alternative eigenvalue criterion}\label{sec:criterion}

\begin{lemma}[Exact equivalence]\label{lem:equiv}
A nonreal number $\lambda$ satisfies \eqref{eq:zproduct-new} for some $t_1,t_2,t_3,t_4\in(0,1]$
if and only if there exist $u_1,u_2,u_3,u_4\in[m,M)$ such that
\begin{align}
 u_1+u_2+u_3+u_4&=2\pi,\label{eq:sumargs-new}\\
 F(u_1)+F(u_2)+F(u_3)+F(u_4)&=0.\label{eq:sumF-new}
\end{align}
\end{lemma}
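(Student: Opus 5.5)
The plan is to treat \eqref{eq:zproduct-new} as an equality of two nonzero complex numbers and split it into a modulus condition and an argument condition. Lemma~\ref{lem:parametrization} then converts both conditions from the variables $t_k$ to the variables $u_k$. As in the rest of this section, I assume $y=b>0$; the case $b<0$ follows by complex conjugation, since the $t_k$ are real. Because $y\neq 0$, each factor $z+t_k=(x+t_k)+iy$ is nonzero and lies in the open upper half-plane. Since $t_k>0$, equation \eqref{eq:zproduct-new} is equivalent to
\[
\prod_{k=1}^4 \frac{z+t_k}{t_k}=1 .
\]
A nonzero complex number equals $1$ if and only if its modulus is $1$ and its argument is $0 \pmod{2\pi}$. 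So \eqref{eq:zproduct-new} holds if and only if two conditions hold:
\[
\sum_{k=1}^4\bigl(\log|z+t_k|-\log t_k\bigr)=0
\qquad\text{and}\qquad
\sum_{k=1}^4 \Arg(z+t_k)\in 2\pi\mathbb{Z}.
\]

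Next, set $u_k:=u(t_k)=\Arg(z+t_k)$. By Lemma~\ref{lem:parametrization}(3), $t\mapsto u(t)$ is a bijection from $(0,1]$ onto $[m,M)$, and its inverse is $u\mapsto t(u)=y\cotan u-x$. Hence choosing $t_1,\dots,t_4\in(0,1]$ is the same as choosing $u_1,\dots,u_4\in[m,M)$, with $t_k=t(u_k)$.

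Under this correspondence $\log|z+t_k|-\log t_k=F(u_k)$ by the definition \eqref{functionF}, so the modulus condition is exactly \eqref{eq:sumF-new}. For the argument condition, each $u_k$ lies in $(0,\pi)$ because $y>0$, so $\sum_k u_k\in(0,4\pi)$. The only multiple of $2\pi$ in that open interval is $2\pi$, so the argument condition is exactly \eqref{eq:sumargs-new}.

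Both directions now follow by reading this chain of equivalences forwards or backwards. Given admissible $t_k$, the numbers $u_k=u(t_k)$ satisfy \eqref{eq:sumargs-new} and \eqref{eq:sumF-new}. Conversely, given admissible $u_k$, the numbers $t_k=t(u_k)\in(0,1]$ satisfy \eqref{eq:zproduct-new}.

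The only delicate point I expect is the branch bookkeeping in the argument condition. One must check that the principal arguments really lie in $(0,\pi)$, so that they add without reduction mod $2\pi$, and that the range excludes both $0$ and $4\pi$. Both facts follow from $y>0$ and the openness of $(0,\pi)$. A secondary check is that $t(u)$ is genuinely the inverse of $u(t)$ on $[m,M)$, and not just a formula valid up to a shift by $\pi$. This holds because $\cotan$ is a bijection from $(0,\pi)$ onto $\mathbb{R}$, and Lemma~\ref{lem:parametrization} already identifies the image of $(0,1]$ under $u(\cdot)$.
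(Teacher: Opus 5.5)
Your proposal is correct and follows essentially the same route as the paper: both split \eqref{eq:zproduct-new} into a modulus condition, which becomes \eqref{eq:sumF-new} via the definition of $F$, and an argument condition, which becomes \eqref{eq:sumargs-new} because $u_k\in(0,\pi)$ forces the sum into $(0,4\pi)$. Both also pass between the variables using the bijection $t\leftrightarrow u$ of Lemma~\ref{lem:parametrization}; your normalization $\prod_k (z+t_k)/t_k=1$ merely packages the paper's two separate directions into a single chain of equivalences.
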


\begin{proof}
Assume first that \eqref{eq:zproduct-new} holds with $t_k\in(0,1]$, and set $u_k=\Arg(z+t_k)$. By Lemma~\ref{lem:parametrization}, $u_k\in[m,M)$. Writing $z+t_k=|z+t_k|e^{iu_k}$ and comparing arguments in \eqref{eq:zproduct-new}, we find
\[
u_1+u_2+u_3+u_4\equiv 0\pmod{2\pi}.
\]
Since each $u_k\in(0,\pi)$, the sum lies in $(0,4\pi)$, so necessarily $u_1+u_2+u_3+u_4=2\pi$. Comparing moduli in \eqref{eq:zproduct-new} gives
\[
\prod_{k=1}^4|z+t_k|=\prod_{k=1}^4 t_k.
\]
Taking logarithms and using the definition of $F$ results in \eqref{eq:sumF-new}.

Conversely, assume that $u_1,\dots,u_4\in[m,M)$ satisfy \eqref{eq:sumargs-new} and \eqref{eq:sumF-new}. Define $t_k=t(u_k)$. Then $t_k\in(0,1]$ by Lemma~\ref{lem:parametrization}. Also,
\[
\prod_{k=1}^4(z+t_k)=\Big(\prod_{k=1}^4|z+t_k|\Big)e^{i(u_1+u_2+u_3+u_4)}=\prod_{k=1}^4|z+t_k|.
\]
Because \eqref{eq:sumF-new} is satisfied, we have $\prod_{k=1}^4 |z+t_k|=\prod_{k=1}^4 t_k$ and obtain \eqref{eq:zproduct-new}.
\end{proof}

\begin{definition}\label{def:P-Psi}
Define the set
\[
P:=\Big\{(u_1,u_2,u_3,u_4)\in[m,M)^4:\ u_1+u_2+u_3+u_4=2\pi\Big\}
\]
and the function
\[
\Psi(u_1,u_2,u_3,u_4):=\sum_{k=1}^4 F(u_k).
\]
\end{definition}

\begin{proposition}[Criterion in terms of $P$ and $\Psi$]\label{prop:criterion}
For nonreal $\lambda$,
\begin{equation}\label{eq:criterion-new}
\lambda\in\sigma(A(\alpha_1,\alpha_2,\alpha_3,\alpha_4))\ \text{for some }\alpha_1,\alpha_2,\alpha_3,\alpha_4\in[0,1)
\iff P\neq\emptyset\ \text{and}\ 0\in\Psi(P).
\end{equation}
Moreover, $P$ is convex and hence connected, and $\Psi$ is continuous on $P$.
\end{proposition}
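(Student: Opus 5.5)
The plan is to chain together the results already established. By Proposition~\ref{prop:multconstraint}, for nonreal $\lambda$ we have $\lambda\in\sigma(A(\alpha_1,\alpha_2,\alpha_3,\alpha_4))$ if and only if \eqref{eq:multconstraint-new} holds. The substitution \eqref{def:tk-z}, $t_k=1-\alpha_k$, is a bijection between $\alpha_k\in[0,1)$ and $t_k\in(0,1]$, so this is equivalent to \eqref{eq:zproduct-new} with $z=\lambda-1$. Hence ``$\lambda$ is an eigenvalue for some admissible $\alpha$'' is the same as ``\eqref{eq:zproduct-new} has a solution with $t_1,\dots,t_4\in(0,1]$''.

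Lemma~\ref{lem:equiv} then says this is equivalent to the existence of $u_1,\dots,u_4\in[m,M)$ satisfying \eqref{eq:sumargs-new} and \eqref{eq:sumF-new}. By Definition~\ref{def:P-Psi}, the first condition says $(u_1,\dots,u_4)\in P$ and the second says $\Psi(u_1,\dots,u_4)=0$. So a solution exists exactly when some point of $P$ is mapped by $\Psi$ to $0$. That is precisely the statement ``$P\neq\emptyset$ and $0\in\Psi(P)$'', since $0\in\Psi(P)$ already forces $P\neq\emptyset$. As in the rest of the paper, I will assume $y=b>0$ throughout, with the case $b<0$ following by conjugation. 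This is needed so that $u(t)\in(0,\pi)$ and the parametrization of Lemma~\ref{lem:parametrization} applies.

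For the \emph{moreover} part, $P$ is the intersection of the box $[m,M)^4$ with the affine hyperplane $\{u_1+u_2+u_3+u_4=2\pi\}$. A half-open box is convex and a hyperplane is convex, so $P$ is convex. A convex set is path-connected via line segments, and hence connected; the empty set is trivially both.

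For continuity, note that on $[m,M)$ we have $t(u)=y\cot u-x>0$ by Lemma~\ref{lem:parametrization}, and $u\mapsto t(u)$ is continuous because $0<m<M<\pi$ keeps $u$ away from the poles of $\cot u$. Also $|z+t(u)|=y\csc u>0$ is continuous there. So $F(u)=\log(y\csc u)-\log t(u)$ is a composition of continuous functions with positive arguments inside the logarithms, and is therefore continuous on $[m,M)$. Consequently $\Psi$, a finite sum of the functions $F(u_k)$, is continuous on $[m,M)^4$, and in particular on $P$.

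The only point needing care is the positivity $t(u)>0$ on all of $[m,M)$, which ensures the logarithms are defined. This comes from the bijection in Lemma~\ref{lem:parametrization}(3): $t(u)$ recovers values in $(0,1]$, and $t(u)\to 0$ only as $u\to M$, which is excluded. I do not expect any step to be a serious obstacle.
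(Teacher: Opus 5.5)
Your proposal is correct and follows essentially the same route as the paper. The equivalence is Lemma~\ref{lem:equiv}, reached via Proposition~\ref{prop:multconstraint} and the substitution $t_k=1-\alpha_k$, rewritten in the notation of Definition~\ref{def:P-Psi}; convexity comes from intersecting the box $[m,M)^4$ with the hyperplane, and continuity of $\Psi$ from continuity of $F$. Your extra checks, namely $t(u)>0$ on $[m,M)$ and the explicit $b>0$ convention with conjugation, only make explicit details the paper leaves implicit.
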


\begin{proof}
Equivalence (\ref{eq:criterion-new}) is exactly the result of Lemma~\ref{lem:equiv} translated into the notation of Definition~\ref{def:P-Psi}. The set $P$ is the intersection of the box $[m,M)^4$ with the affine hyperplane $u_1+u_2+u_3+u_4=2\pi$, so is convex. Continuity of $\Psi$ follows from continuity of $F$.
\end{proof}

\section{Necessary eigenvalue conditions}\label{sec:necessary}

This section derives the three necessary conditions in Theorem~\ref{thm:main} from Proposition~\ref{prop:criterion}.

\begin{lemma}\label{lem:a-range}
If $\lambda=a+ib$ is a nonreal eigenvalue of some matrix $A(\alpha_1,\alpha_2,\alpha_3,\alpha_4)$, then $0\le a<1$.
\end{lemma}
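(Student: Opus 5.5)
The argument works directly with the criterion of Proposition~\ref{prop:criterion}, or equivalently Lemma~\ref{lem:equiv}. Assume without loss of generality that $b=y>0$. Then there are $u_1,\dots,u_4\in[m,M)$ with $u_1+u_2+u_3+u_4=2\pi$. The upper bound $a<1$ and the lower bound $a\ge 0$ will both come from the single constraint that four angles in $[m,M)$ must sum to exactly $2\pi$.

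\emph{Upper bound.} Suppose $a\ge 1$, so $x=a-1\ge 0$. Then every $z+t=(x+t)+iy$ with $t\in(0,1]$ has positive real part, and hence $u(t)\in(0,\pi/2)$. It follows that $u_1+\dots+u_4<2\pi$, which contradicts \eqref{eq:sumargs-new}. Therefore $a<1$.

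\emph{Lower bound.} Suppose $a<0$. Since each $u_k\ge m=\Arg(\lambda)$, we get $2\pi=\sum u_k\ge 4m$, so $m\le\pi/2$. But $a<0$ and $b>0$ give $m=\Arg(a+ib)\in(\pi/2,\pi)$, and then $\sum u_k\ge 4m>2\pi$, a contradiction. Hence $a\ge 0$.

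The only point needing care is using the correct direction of the bounds $u_k\in[m,M)$. By Lemma~\ref{lem:parametrization}, $u(t)$ is decreasing, so the minimum $m$ is attained at $t=1$ and equals $\Arg(z+1)=\Arg\lambda$. In the first case, $u_k<M\le\pi/2$ also yields the strict inequality directly. I expect no real obstacle here: the lemma follows from elementary argument bounds, and neither $F$ nor the modulus condition \eqref{eq:sumF-new} is needed.
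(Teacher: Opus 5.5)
Your proof is correct and takes essentially the same approach as the paper: both use only the constraint that four angles in $[m,M)$ sum to $2\pi$, which forces $m\le\pi/2<M$, i.e.\ $a\ge 0$ and $a<1$. The paper derives $m\le \pi/2<M$ directly and then translates it into conditions on $a$, while you argue each bound by contradiction; you also state the strict inequality $2\pi<4M$ explicitly, which the paper writes loosely as $2\pi\le 4M$.
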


\begin{proof}
Assume $\lambda=a+ib\in\sigma(A(\alpha_1,\alpha_2,\alpha_3,\alpha_4))$ and $b>0$. By Proposition~\ref{prop:criterion}, the set $P$ is nonempty. For $(u_1,u_2,u_3,u_4)\in P$, we have
\[4m \leq u_1+u_2+u_3+u_4 = 2\pi \leq 4M.
\]
since $u_k\in[m,M)$. Hence, the interval $[m,M)$ must contain $\pi/2$:
\begin{equation}\label{eq:mMpi2-new}
m\le \frac{\pi}{2}<M.
\end{equation}
Since $m=\Arg(\lambda)=\Arg(a+ib)$ with $b>0$, the condition $\Arg(a+ib)\le \pi/2$ is equivalent to $a\ge 0$. Likewise, $M=\Arg(\lambda-1)=\Arg((a-1)+ib>\pi/2$ is equivalent to $a-1<0$. Hence $0\le a<1$.
\end{proof}

\begin{lemma}\label{lem:F-convex}
The function $F$ is strictly convex on $(0,\pi)$.
\end{lemma}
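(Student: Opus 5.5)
The plan is to rewrite $F$ in closed form as a function of $u$ alone and then read off convexity from its second derivative. By Lemma~\ref{lem:parametrization}, $|z+t(u)|=y\csc u$ and $t(u)=y\cotan u-x$. Substituting into \eqref{functionF} gives
\[
F(u)=\log y-\log\sin u-\log\bigl(y\cotan u-x\bigr)=\log y-\log\bigl(y\cos u-x\sin u\bigr).
\]
So $F$ is, up to an additive constant, minus the logarithm of a single trigonometric expression.

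Next I would identify that expression using the polar form of $z$. Write $z=x+iy=|z|e^{iM}$ with $M=\Arg(z)$, so $x=|z|\cos M$ and $y=|z|\sin M$. Then
\[
y\cos u-x\sin u=|z|\bigl(\sin M\cos u-\cos M\sin u\bigr)=|z|\sin(M-u),
\]
and therefore
\[
F(u)=\log\frac{y}{|z|}-\log\sin(M-u).
\]

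The final step is a direct computation of the derivatives:
\[
F'(u)=\cotan(M-u),\qquad F''(u)=\csc^2(M-u)>0 .
\]
Thus $F''>0$ at every point $u\in(0,\pi)$ at which $F$ is defined, and $F$ is strictly convex there. The formula shows that $F$, as defined through the real logarithm $\log t(u)$, makes sense exactly where $t(u)>0$, equivalently $\sin(M-u)>0$, i.e.\ for $u<M$.

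The main obstacle is this domain bookkeeping, which has to be reconciled with the statement's $(0,\pi)$. I would handle it by noting that the expression $\log\frac{y}{|z|}-\log|\sin(M-u)|$ is the natural extension of $F$ to all of $(0,\pi)$ minus the single point $u=M$. Its second derivative is again $\csc^2(M-u)>0$, so strict convexity holds on every subinterval of $(0,\pi)$ that avoids that singularity. In particular it holds on $[m,M)\subset(0,\pi)$, which contains every coordinate of a point of $P$ and is all that the subsequent arguments (Jensen/majorization applied to $\Psi$) require.
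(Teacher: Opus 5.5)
Your proof is correct and is essentially the paper's: both make $F$ explicit in $u$ through the parametrization lemma and check $F''>0$, and your identity $y\cos u-x\sin u=|z|\sin(M-u)$ simply rewrites the paper's $F''(u)=\frac{x^2+y^2}{(y\cotan u-x)^2\sin^2 u}$ as $\csc^2(M-u)$. Your domain remark is a fair correction to the statement, since $F$ is only defined (and strictly convex) on $(0,M)$ rather than on all of $(0,\pi)$, but $(0,M)\supset[m,M)$ is exactly what the later Jensen and Karamata arguments use.
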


\begin{proof}
From Definition~\ref{def:uF} and Lemma \ref{lem:parametrization},
\begin{equation}\label{F(u) goniometrical form}
F(u)=\log(y\csc u)-\log(y\cotan u-x),\qquad y>0.
\end{equation}
Differentiating gives
\[
F''(u)=\frac{x^2+y^2}{(y\cotan u-x)^2\sin^2 u}.
\]

Since $y>0$, $F''(u)>0$ on $(0,\pi)$.
\end{proof}

\begin{proposition}\label{prop:right-boundary-necessary}
If $\lambda=a+ib$ is a nonreal eigenvalue of some matrix $A(\alpha_1,\alpha_2,\alpha_3,\alpha_4)$, then
\[
a+b\le 1.
\]
\end{proposition}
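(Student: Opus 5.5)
The plan is to combine the criterion of Proposition~\ref{prop:criterion} with the convexity of $F$ from Lemma~\ref{lem:F-convex} through Jensen's inequality. The constraint $u_1+u_2+u_3+u_4=2\pi$ says that the four angles have average exactly $\pi/2$. Convexity then bounds $\Psi$ from below by $4F(\pi/2)$, and $F(\pi/2)$ turns out to be an explicit and simple expression in $a$ and $b$.

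\emph{Setup.} Assume $b>0$ and that $\lambda=a+ib$ is an eigenvalue of some $A(\alpha_1,\alpha_2,\alpha_3,\alpha_4)$. By Proposition~\ref{prop:criterion}, $P\neq\emptyset$ and there is a point $(u_1,u_2,u_3,u_4)\in P$ with $\Psi(u_1,u_2,u_3,u_4)=0$. By Lemma~\ref{lem:a-range}, specifically \eqref{eq:mMpi2-new}, we have $\pi/2\in[m,M)$ and $0\le a<1$.

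\emph{Domain of $F$.} We must make sure $F$ is genuinely defined and convex on an interval containing all the $u_k$ and $\pi/2$. The formula \eqref{F(u) goniometrical form} only makes sense where $t(u)=y\cotan u-x>0$, i.e. for $u<M$. On the interval $[m,M)$ this holds, since there $t(u)\in(0,1]$ by Lemma~\ref{lem:parametrization}. The second-derivative computation of Lemma~\ref{lem:F-convex} then shows $F''>0$ on $[m,M)$. This interval is convex and contains $u_1,\dots,u_4$ and $\pi/2$, so Jensen's inequality applies there.

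\emph{Jensen step.} Jensen's inequality for convex $F$ gives
\[
0=\sum_{k=1}^4F(u_k)\ \ge\ 4F\Big(\tfrac{u_1+u_2+u_3+u_4}{4}\Big)=4F(\pi/2).
\]

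\emph{Evaluating $F(\pi/2)$.} At $u=\pi/2$ we have $\cotan u=0$ and $\csc u=1$. Hence $t(\pi/2)=-x=1-a>0$ and $|z+t(\pi/2)|=y=b$, so
\[
F(\pi/2)=\log b-\log(1-a).
\]
The inequality $F(\pi/2)\le 0$ therefore reads $b\le 1-a$, i.e. $a+b\le 1$. Restoring $|b|$ by conjugation handles the case $b<0$.

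\emph{Main obstacle.} The only delicate point is the domain issue above. Lemma~\ref{lem:F-convex} is stated on $(0,\pi)$, but $F$ is only meaningful where $t(u)>0$. So I would explicitly note that the argument restricts to $[m,M)$, and invoke Lemma~\ref{lem:a-range} to guarantee that $\pi/2$ lies in this interval. Strict convexity is not needed here. It would additionally show that equality $a+b=1$ forces $u_1=\dots=u_4=\pi/2$, i.e. all $\alpha_k$ equal, which is consistent with the segment $CR$.
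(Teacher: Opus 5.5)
Your proposal is correct and is essentially the paper's proof: Jensen's inequality for the convex $F$, using that the four angles average to $\pi/2$, gives $0=\Psi\ge 4F(\pi/2)$, and $F(\pi/2)=\log\bigl(b/(1-a)\bigr)\le 0$ yields $a+b\le 1$. Your remark that $F$ is only defined, and convex, on $(0,M)$ rather than on all of $(0,\pi)$ is a worthwhile tightening of Lemma~\ref{lem:F-convex}; invoking Lemma~\ref{lem:a-range} is not needed, since $\pi/2$ lies in $[m,M)$ automatically as the average of four points of that interval.
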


\begin{proof}
Again assume $b>0$. For all $(u_1,u_2,u_3,u_4)\in P$, we have by Lemma~\ref{lem:F-convex} and the Jensen' inequality,
\[
\frac{1}{4}\sum_{k=1}^4 F(u_k)\ge F\!\Big(\frac{u_1+u_2+u_3+u_4}{4}\Big)=F(\pi/2).
\]
Hence
\begin{equation}\label{eq:jensen-new}
\Psi(u_1,u_2,u_3,u_4)\ge 4F(\pi/2)
\qquad\text{for all }(u_1,u_2,u_3,u_4)\in P.
\end{equation}
If $\lambda \in \sigma(A(\alpha_1,\alpha_2,\alpha_3,\alpha_4))$, then $0\in\Psi(P)$ by Proposition~\ref{prop:criterion}, so \eqref{eq:jensen-new} forces $F(\pi/2)\le 0$.

Now
\[
F(\pi/2)=\log(y\csc(\pi/2)) - \log(y\cotan(\pi/2)-x)=\log(y)-\log(-x).
\]
Since $y=b$ and $x=a-1$, this becomes
\[
F(\pi/2)=\log\!\Big(\frac{b}{1-a}\Big).
\]
Thus $F(\pi/2)\le 0$ if and only if $b\le 1-a$, that is, $a+b\le 1$. Replacing $b$ by $|b|$ gives the result for arbitrary nonreal $\lambda$.
\end{proof}

Due to Proposition \ref{prop:criterion}, we may assume that \(P \neq \emptyset\) in the following lemmas.

\begin{lemma}[Unbounded-supremum regime]\label{lem:unbounded}
If \(P \neq \emptyset\) and $3m+M\le 2\pi$, then $\sup_P\Psi=+\infty$.
\end{lemma}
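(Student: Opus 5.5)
The plan is to exhibit points of $P$ at which one coordinate tends to $M$ while the other three stay in $[m,M)$. Along such points $\Psi\to+\infty$, because $F(u)\to+\infty$ as $u\uparrow M$ and $F$ is bounded on any compact subinterval of $[m,M)$.

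\textbf{Blow-up of $F$ at $M$.} By Lemma~\ref{lem:parametrization}, as $u\uparrow M$ we have $t(u)=y\cotan u-x\downarrow 0$. Meanwhile $|z+t(u)|=y\csc u\to y\csc M=|z|>0$, since $M\in(0,\pi)$. Hence $F(u)=\log|z+t(u)|-\log t(u)\to+\infty$. On $[m,M-\delta]$, $F$ is continuous, hence bounded below, say by $-C$.

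\textbf{Construction of a sequence in $P$.} For small $\varepsilon>0$, set $u_1=M-\varepsilon$ and $u_2=u_3=u_4=(2\pi-M+\varepsilon)/3$.

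First, the common value lies in $[m,M)$. The hypothesis $3m+M\le 2\pi$ gives $(2\pi-M)/3\ge m$, so $(2\pi-M+\varepsilon)/3>m$. For the upper bound, $P\neq\emptyset$ forces $2\pi<4M$, exactly as in the proof of Lemma~\ref{lem:a-range} (since each $u_k<M$, the inequality is strict). Then $(2\pi-M)/3<M$, and for $\varepsilon$ small enough $(2\pi-M+\varepsilon)/3<M$ as well. These common values stay in a fixed compact subinterval of $[m,M)$, so $F$ is bounded there. Consequently
\[
\Psi\ge F(M-\varepsilon)-3C\to+\infty \quad (\varepsilon\downarrow 0),
\]
which gives $\sup_P\Psi=+\infty$.

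\textbf{Main obstacle.} The only delicate step is verifying that the three remaining coordinates fit inside $[m,M)$. The lower bound is precisely where $3m+M\le 2\pi$ enters, and the strict upper bound comes from nonemptiness of $P$. If $3m+M=2\pi$ exactly, the choice $u_2=u_3=u_4=m+\varepsilon/3$ still works, since $m$ is included in the interval.
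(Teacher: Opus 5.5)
Your proof is correct and is essentially the paper's argument: send one coordinate to $M$, where $F\to+\infty$, and split $2\pi$ minus that coordinate equally among the other three, which then stay in a compact subinterval of $[m,M)$ where $F$ is bounded. You are slightly more careful than the paper in noting that the upper bound $(2\pi-M)/3<M$ comes from $P\neq\emptyset$ (i.e.\ $2\pi<4M$), not from the hypothesis $3m+M\le 2\pi$.
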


\begin{proof}
Fix $u_4\in[m,M)$ and define $u_1=u_2=u_3=(2\pi-u_4)/3$. Then $u_1+u_2+u_3+u_4=2\pi$. If $u_4$ is sufficiently close to $M$ from below, the hypothesis $3m+M\le 2\pi$ implies $u_1\in[m,M)$, hence $(u_1,u_2,u_3,u_4)\in P$.

As $u_4\uparrow M$, we have $t(u_4)\downarrow 0$, so
\[
F(u_4)=\log|z+t(u_4)|-\log t(u_4)\to +\infty.
\]
Meanwhile $u_1=(2\pi-u_4)/3$ converges to $(2\pi-M)/3\in[m,M)$, so $F(u_1)$ is bounded. Therefore
\[
\Psi(u_1,u_2,u_3,u_4)=3F(u_1)+F(u_4)\to +\infty.
\]
\end{proof}

\begin{lemma}[Tight regime]\label{lem:tight}
Assume \(P \neq \emptyset\) and $3m+M>2\pi$, and define $U:=2\pi-3m$. Then $U\in[m,M)$ and
\[
\max_P\Psi=3F(m)+F(U).
\]
Equality holds if and only if $(u_1,u_2,u_3,u_4)$ is a permutation of $(U,m,m,m)$.
\end{lemma}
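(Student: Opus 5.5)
First I will check that $U\in[m,M)$. The hypothesis $3m+M>2\pi$ gives $U=2\pi-3m<M$. Since $P\neq\emptyset$, inequality \eqref{eq:mMpi2-new} from the proof of Lemma~\ref{lem:a-range} gives $m\le\pi/2$, hence $4m\le 2\pi$ and $U=2\pi-3m\ge m$. So $(U,m,m,m)\in P$.

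For the maximum, the idea is that on the box $[m,M)^4$ cut by the hyperplane $\sum u_k=2\pi$, the vector $(U,m,m,m)$ majorizes every other feasible vector. Then Karamata's inequality applied to the strictly convex $F$ (Lemma~\ref{lem:F-convex}) gives $\Psi(u)\le 3F(m)+F(U)$. Concretely, take $(u_1,\dots,u_4)\in P$ ordered decreasingly, $u_{(1)}\ge u_{(2)}\ge u_{(3)}\ge u_{(4)}$. I will verify the partial sums:
\[
u_{(1)}\le U,\qquad u_{(1)}+u_{(2)}\le U+m,\qquad u_{(1)}+u_{(2)}+u_{(3)}\le U+2m,
\]
with total $2\pi$ in both cases. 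Each follows by writing the partial sum as $2\pi$ minus the remaining coordinates, each of which is $\ge m$. For example, $u_{(1)}+u_{(2)}=2\pi-u_{(3)}-u_{(4)}\le 2\pi-2m=U+m$. So $(U,m,m,m)\succ u$.

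To keep the argument self-contained and to obtain the equality case, I may instead avoid quoting Karamata and use a direct smoothing argument. Suppose $u\in P$ has two coordinates $u_i\ge u_j>m$ with $u_i<U$. Replace them by $u_i+\varepsilon$ and $u_j-\varepsilon$, with $\varepsilon=\min(u_j-m,\,U-u_i)>0$. By strict convexity of $F$ this strictly increases $\Psi$, because $F(u_i+\varepsilon)-F(u_i)>F(u_j)-F(u_j-\varepsilon)$. Feasibility is preserved, since no coordinate can exceed $U$ given that the others are $\ge m$. After at most three steps we reach a permutation of $(U,m,m,m)$. Hence any $u$ not a permutation of that vector has $\Psi(u)<3F(m)+F(U)$, which yields both the maximum and the strictness of equality.

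The main obstacle is bookkeeping in the smoothing step rather than depth. I must check that $\varepsilon>0$ whenever $u$ is not already a permutation of $(U,m,m,m)$, and that the process terminates. The first point holds because if at least two coordinates exceed $m$, the largest one is strictly below $U$. A second point needs care: the strict convexity inequality for unequal increments, which needs $u_j-\varepsilon<u_j\le u_i<u_i+\varepsilon$ and uses that slopes of chords of a strictly convex function increase. Finally, the maximum is attained at a point of $P$ (not merely a supremum), because $U<M$, so no boundary issue at the open end $M$ arises.
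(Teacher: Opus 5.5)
Your proposal is correct and follows the paper's proof. It uses the same verification that $U\in[m,M)$ via $m\le \pi/2$, the same partial-sum check that $(U,m,m,m)$ majorizes every feasible vector, and Karamata's inequality with strict convexity of $F$ for both the maximum and the equality case. Your optional smoothing argument is just an elementary proof of the strict Karamata step in this extreme case, and together with your observation that $(U,m,m,m)\in P$ it makes the paper's compactness argument for attainment of the maximum unnecessary.
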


\begin{proof}
Since $3m+M>2\pi$, we have $U<M$. Also $m\le \pi/2$ by Lemma~\ref{lem:a-range}, hence
\[
U=2\pi-3m\ge 2\pi-3\cdot\frac{\pi}{2}=\frac{\pi}{2}\ge m.
\]
Thus $U\in[m,M)$.

Consider the ordered feasible set
\[
P^\downarrow:=\{(v_1,v_2,v_3,v_4)\in[m,M)^4:\ v_1\ge v_2\ge v_3\ge v_4,\ v_1+v_2+v_3+v_4=2\pi\}.
\]
Since $\Psi$ is symmetric, it suffices to maximize $\Psi$ on $P^\downarrow$. Every point of $P^\downarrow$ lies in the compact box $[m,U]^4$: indeed, if $(v_1,v_2,v_3,v_4)\in P^\downarrow$, then
\[
v_1=2\pi-(v_2+v_3+v_4)\le 2\pi-3m=U,
\]
and therefore $v_k\le U$ for all $k$. Hence $\Psi$ attains its maximum on $P^\downarrow$.

Now let $w=(U,m,m,m)$. We claim that $w \in P^\downarrow$ majorizes every $v=(v_1,v_2,v_3,v_4)\in P^\downarrow$. Indeed,
\[
v_1\le U,
\qquad
v_1+v_2\le 2\pi-2m=U+m,
\qquad
v_1+v_2+v_3\le 2\pi-m=U+2m,
\]
and both vectors have total sum $2\pi$. By Karamata's inequality and strict convexity of $F$,
\[
\Psi(v_1,v_2,v_3,v_4)\le F(U)+3F(m),
\]
with equality if and only if $v$ is a permutation of $(U,m,m,m)$.
\end{proof}


\begin{lemma}\label{lem:GimpliesTight}
Assume $b>0$ and $0\le a\le 1$. If $G(a,b)\le 0$, then $3m+M>2\pi$.
\end{lemma}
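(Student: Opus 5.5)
We prove the contrapositive in explicit algebraic form: we translate $3m+M>2\pi$ into a polynomial inequality in $a$ and $s:=b^2$, and then show that this inequality holds on the whole set where $G(a,b)\le 0$.

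\textbf{Step 1: trigonometric reduction.} Put $\varphi:=\pi/2-m$ and $\psi:=M-\pi/2$. Since $b>0$ and $0\le a\le 1$, we have $\varphi,\psi\in[0,\pi/2)$, with $\tan\varphi=a/b$ and $\tan\psi=(1-a)/b$. The target $3m+M>2\pi$ is equivalent to $\psi>3\varphi$. Because $\psi<\pi/2$, this requires $3\varphi<\pi/2$, that is $\tan^2\varphi<1/3$, or $b^2>3a^2$. Given that, $\psi>3\varphi$ is equivalent to $\tan\psi>\tan 3\varphi$, since both angles lie in $[0,\pi/2)$ where $\tan$ is increasing. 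With $T=a/b$, the triple-angle formula $\tan3\varphi=(3T-T^3)/(1-3T^2)$ turns this into $(1-a)(b^2-3a^2)>3ab^2-a^3$. This simplifies to
\[
H(s):=s(1-4a)-a^2(3-4a)>0,\qquad s=b^2 .
\]
If $a<1/4$, the condition $H(s)>0$ already gives $s>a^2(3-4a)/(1-4a)\ge 3a^2$. So it suffices to prove two things: that $G\le0$ forces $a<1/4$, and that $H(s)>0$ whenever $G\le 0$. The case $a=0$ is trivial, since $H(s)=s>0$.

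\textbf{Step 2: the set $G\le 0$ as an interval in $s$.} Write $c=a^2+a$. Then $G\le 0$ becomes
\[
Q(s):=s^2+(2c-1)s+c^2+2a^2\le 0 .
\]
This has a solution $s$ only if the discriminant $1-4c-8a^2=1-4a-12a^2$ is nonnegative. That forces $(6a-1)(2a+1)\le 0$, i.e. $a\le 1/6<1/4$. Hence $s$ lies in the root interval $[s_-,s_+]$ of $Q$. For $a\le1/6$ the function $H$ is strictly increasing in $s$, so it suffices to show $s_->s^*:=a^2(3-4a)/(1-4a)$.

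\textbf{Step 3: the endpoint comparison (main obstacle).} The inequality $s^*<s_-$ holds exactly when $s^*$ lies to the left of the vertex, $s^*<(1-2c)/2$, and $Q(s^*)>0$.

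The vertex condition is a low-degree inequality on $0<a\le 1/6$ that is easy to check. For instance, at $a=1/6$ we get $s^*=(1/36)(7/3)/(1/3)=7/36\approx0.194$, while the vertex is $(1-7/18)/2\approx0.306$.

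The main work is $Q(s^*)>0$. After multiplying by $(1-4a)^2$, this becomes a one-variable polynomial inequality in $a$ on $(0,1/6]$. First I would factor out the evident power of $a$; the constant term of $Q(s^*)$ in $a$ is $0$, coming from $c^2+2a^2$ and $s^*$, both of order $a^2$. Then I would show the remaining factor is positive on $[0,1/6]$. Near $a=0$ the leading behaviour is $Q(s^*)\approx -3a^2+2a^2+\dots$, which looks worrying: this near-cancellation is exactly where I expect the difficulty.

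If the sign turns out to be wrong near $a=0$, I would switch from comparing $s^*$ with $s_-$ to the following direct argument. From $G\le 0$ we get $s\ge 2a^2+(s+c)^2\ge 2a^2+s^2$. I would combine this with $s\ge s_-$ to bound $s$ from below by a quantity of order $a$ (not $a^2$), since $s_-\approx c^2+2a^2$ is too weak. Concretely, I would evaluate the sign of $H$ at both roots via resultants, or simply check $H(s_-)>0$ by substituting the explicit root $s_-=\tfrac12\bigl(1-2c-\sqrt{1-4a-12a^2}\bigr)$ and squaring carefully.
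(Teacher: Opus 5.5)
\textbf{Verdict.} Your route is the paper's, and it is correct. Your $H(s)$ is exactly the paper's $N(a,b)$. As in the paper, $G\le 0$ forces $a\le 1/6$, $H$ is increasing in $s$, and everything reduces to showing $s_->s^*$. Your observation that $H(s)>0$ already gives $s>s^*\ge 3a^2$ when $a<1/4$ is a real simplification: it makes the paper's separate proof of $b^2>3a^2$ (its Step~3) unnecessary. However, the proposal stops at the only nontrivial point. You never establish $Q(s^*)>0$, your heuristic for its sign is wrong, and the fallback you describe would not work.

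\textbf{The missing step.} In the estimate $Q(s^*)\approx -3a^2+2a^2$ you dropped the $a^2$ coming from $c^2=(a+a^2)^2$. In fact $c^2+2a^2=3a^2+2a^3+a^4$, and all terms cancel through order $a^5$. The exact computation is
\[
(1-4a)^2\,Q(s^*)=a^2\bigl[a^2(3-4a)^2+(2a^2+2a-1)(3-4a)(1-4a)+(3+2a+a^2)(1-4a)^2\bigr]=64a^6 ,
\]
so $Q(s^*)>0$ for $0<a\le 1/6$. Your vertex condition $s^*<(1-2c)/2$ is equivalent, after multiplying by $1-4a>0$, to $1-6a+16a^3>0$, which is immediate on $(0,1/6]$; a numerical check at $a=1/6$ is not a proof. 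These two facts give $s^*<s_-$ and finish the argument.

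\textbf{Relation to the paper.} This is the same identity as the paper's Step~5. Since $4Q(s^*)=(2s^*+2c-1)^2-(1-4a-12a^2)$ and $(1-4a)(2s^*+2c-1)=-(1-6a+16a^3)$, the paper's difference $256a^6$ is exactly $4\cdot 64a^6$. Moreover, $s_--s^*$ is only of order $a^6$, so no leading-order estimate can decide the sign; an exact computation is unavoidable.

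\textbf{The fallback.} It rests on a false premise. On $\{G\le 0\}$ you cannot bound $s$ below by a quantity of order $a$. Indeed $s_-(a)=3a^2+8a^3+O(a^4)$, and the points $(a,\sqrt{s_-(a)})$ lie on $G=0$, since the curve is tangent to $b=\sqrt3\,a$ at the origin.
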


\begin{proof}
The argument is presented stepwise.

\medskip
\noindent\emph{Step 1: rewrite $G$ as a quadratic function in $s=b^2$.}
Let $s=b^2$, then
\[
G(a,b)=s^2+s(2a^2+2a-1)+(a^2+a)^2+2a^2.
\]
with discriminant equal to
\[
\Delta=-(2a+1)(6a-1).
\]

Hence $G(a,b)\le 0$ is possible only when $0\le a \le  1/6$.

\medskip
\noindent\emph{Step 2: the case $a=0$.}
If $a=0$, then $\lambda=ib$, $m=Arg(\lambda)=\pi/2$ and $M=Arg(\lambda-1)=\pi-\arctan(b)$, so
\[
3m+M=\frac{5\pi}{2}-\arctan(b)>2\pi.
\]
Thus the claim holds.

\medskip
\noindent\emph{Step 3: show $b^2>3a^2$.}
Assume now $0<a<1/6$. Let $s_-(a)$ denote the smaller root of the quadratic polynomial in $s$ corresponding to $G(a,b)$. Since $G(a,b)\le 0$, we have $s=b^2\ge s_-(a)$, where
\[
s_-(a)=\frac{1-2a-2a^2-\sqrt{(2a+1)(1-6a)}}{2}.
\]

Additionally, for every $a\in(0,1/6)$, we have
\[
s_-(a)>3a^2  \iff  32a^3+64a^4>0.
\]
Since the last inequality holds, we become $b^2>3a^2$.


\medskip
\noindent\emph{Step 4: convert $3m+M>2\pi$ into an algebraic inequality.}
Since $a<1$ and $b>0$, we have
\[
M=\pi-\arctan\!\Big(\frac{b}{1-a}\Big).
\]
Thus $3m+M>2\pi$ is equivalent to
\[
\tan(3m)>\frac{b}{1-a}.
\]
Using $\tan m=b/a$ and the triple-angle formula,
\[
\tan(3m)=\frac{b(b^2-3a^2)}{a(3b^2-a^2)}.
\]
Because $b^2>3a^2$, all involved factors are positive and by introducing
\begin{equation}\label{Eq:N(a,b)}
N(a,b):=4a^3-3a^2-4ab^2+b^2
\end{equation}
we get
\[3m+M>2\pi \iff
\tan(3m)>\frac{b}{1-a} \iff N(a,b)>0.
\]

\medskip
\noindent\emph{Step 5: compare roots.}
For fixed $a\in(0,1/6)$, the expression $N(a,b)$ is affine and strictly increasing in $s=b^2$ because $1-4a>0$. Its unique zero occurs at
\[
s_0(a)=\frac{a^2(3-4a)}{1-4a}.
\]
A direct computation shows that
\[
s_-(a)>s_0(a) \iff (1-6a+16a^3)^2>(1-4a)^2(2a+1)(1-6a),
\]
whose left-hand side exceeds the right-hand side by $256a^6$. Consequently, $s\ge s_-(a)$ implies $s>s_0(a)$ and hence $N(a,b)>0$. Therefore $3m+M>2\pi$.

\medskip
\noindent\emph{Step 6: the case $a=1/6$.}
In this case there exists a point for which \(G(a,b)=0\) holds, namely \(a=1/6\) and \(b=\sqrt{11}/6\). One can easily verify that \(b^2>3a^2\) and \(N(a,b)>0\). Through step 4 it also follows that \(3m +M > 2 \pi\).
\end{proof}

\begin{proposition}\label{prop:G-condition}
If $\lambda=a+ib$ is a nonreal eigenvalue of some matrix $A(\alpha_1,\alpha_2,\alpha_3, \alpha_4)$, then
\[
G(a,b)\ge 0.
\]
\end{proposition}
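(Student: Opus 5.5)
I would argue by contradiction, using the two regimes of Lemmas~\ref{lem:unbounded} and~\ref{lem:tight}. Suppose $\lambda=a+ib$ with $b>0$ is an eigenvalue but $G(a,b)<0$. By Lemma~\ref{lem:a-range} we have $0\le a<1$, so Lemma~\ref{lem:GimpliesTight} gives $3m+M>2\pi$. Lemma~\ref{lem:tight} then says
\[
\max_P\Psi=3F(m)+F(U),\qquad U=2\pi-3m\in[m,M).
\]
By Proposition~\ref{prop:criterion} we need $0\in\Psi(P)$, hence $3F(m)+F(U)\ge 0$. It therefore suffices to prove that, in the tight regime, $3F(m)+F(U)\ge 0$ forces $G(a,b)\ge 0$. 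Equivalently, $G(a,b)<0$ forces $3F(m)+F(U)<0$.

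To evaluate the maximum, note that $u=m$ corresponds to $t=1$, so $F(m)=\log|\lambda|$. Let $\tau:=t(U)\in(0,1]$. Exponentiating, the condition $3F(m)+F(U)\ge 0$ reads $|\lambda|^3|\lambda-1+\tau|\ge\tau$. The argument condition $3m+U=2\pi$ says exactly that $w:=\lambda^3(\lambda-1+\tau)$ is a positive real number. So the inequality becomes the real inequality
\[
\lambda^4-\lambda^3+\tau\lambda^3\ge\tau .
\]
This is the characteristic equation of $A_L(1-\tau)$ relaxed to an inequality, which is consistent with $G=0$ being the curve $CL$.

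The plan is to eliminate $\tau$ explicitly. The imaginary part of $w$ vanishes, so
\[
\tau=-\frac{\operatorname{Im}(\lambda^4-\lambda^3)}{\operatorname{Im}\lambda^3}.
\]
Here $\operatorname{Im}\lambda^3=b(3a^2-b^2)$, which is nonzero and in fact negative, because Step~3 of Lemma~\ref{lem:GimpliesTight} gives $b^2>3a^2$ whenever $G\le 0$. The case $a=0$ is handled directly: there $\operatorname{Im}\lambda^3=-b^3\neq 0$. Substituting $\tau$ into the real-part inequality $\operatorname{Re}(\lambda^4-\lambda^3)\ge\tau(1-\operatorname{Re}\lambda^3)$ and clearing the denominator $-\operatorname{Im}\lambda^3>0$ gives a polynomial inequality in $a,b$. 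I expect it to factor as a positive factor times $G(a,b)$, something like $b^2\cdot(\text{positive})\cdot G(a,b)\ge 0$. Checking that the sign-carrying part of this factorization is precisely $G$, with the remaining factors positive on the relevant region $0\le a\le 1/6$ and $b^2>3a^2$, is the main obstacle and purely algebraic. I would expand $\operatorname{Re}$ and $\operatorname{Im}$ of $\lambda^3$ and $\lambda^4$ in $a$ and $s=b^2$, and divide by $b$. A consistency check is available: for $\tau\in(0,1]$ the equality case must reproduce $G=0$, since $A_L(1-\tau)$ realizes the curve $CL$.

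Given the factorization, $G(a,b)<0$ gives $3F(m)+F(U)<0$, so $\max_P\Psi<0$ and $0\notin\Psi(P)$. This contradicts Proposition~\ref{prop:criterion}, so $G(a,b)\ge 0$. The case $b<0$ follows by conjugation, since $G$ depends only on $b^2$.
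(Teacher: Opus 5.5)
Your proposal is correct and is essentially the paper's proof: you use Lemma~\ref{lem:a-range} and the contrapositive of Lemma~\ref{lem:GimpliesTight} to reach the tight regime, then take $\max_P\Psi=3F(m)+F(U)$ from Lemma~\ref{lem:tight} and evaluate it in closed form; your observation that $w=\lambda^3(\lambda-1+\tau)$ is a positive real just repackages the paper's triple-angle computation of $\csc U$ and $\cot U$. The algebra you deferred does close: with $\lambda^3=p+iq$, the cleared inequality is $b|\lambda|^6+(1-a)q-bp=b\bigl(|\lambda|^6-N(a,b)\bigr)=b\bigl((a-1)^2+b^2\bigr)G(a,b)\ge 0$, which is exactly the paper's factorization, and the prefactor is positive for every $b>0$, not only on $0\le a\le 1/6$.
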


\begin{proof}
It is sufficient to treat the case $b>0$. Suppose $\lambda \in \sigma(A(\alpha_1,\alpha_2,\alpha_3, \alpha_4))$. Then $0\in\Psi(P)$ by Proposition~\ref{prop:criterion}, so in particular $\sup_P\Psi\ge 0$.

In the case $3m+M\le 2\pi$, Lemma~\ref{lem:GimpliesTight} implies, by contraposition, that $G(a,b)>0$.

In the case $3m+M>2\pi$, we have by Lemma~\ref{lem:tight},
\[
\max_P\Psi=3F(m)+F(U),\qquad U=2\pi-3m.
\]
We now compute this explicitly. Since $t(m)=1$, we have
\[
F(m)=\log|\lambda|.
\]
Also, by (\ref{F(u) goniometrical form}), we can express $F(U)$ as
\[
F(U)=\log(b\csc U)-\log(b\cotan U+1-a).
\]
Because $U=2\pi-3m$, $\csc m = \frac{|\lambda|}{b}$ and $\cotan m = \frac{a}{b}$, triple-angle identities yield
\[
\csc U=-\frac{|\lambda|^3}{b(3a^2-b^2)},
\qquad
\cotan U=-\frac{a(a^2-3b^2)}{b(3a^2-b^2)}.
\]
Hence
\[
\max_P\Psi=\log\!\Big(\frac{|\lambda|^6}{N(a,b)}\Big),
\]
where $N(a,b)$ as introduced in (\ref{Eq:N(a,b)}), and we get
\[
\max_P\Psi\ge 0
\iff |\lambda|^6\ge N(a,b).
\]
Moreover,
\begin{equation}\label{Eq:factorization N(a,b) and G(a,b)}
|\lambda|^6 - N(a,b) = \left((a - 1)^2 + b^2\right) G(a,b)  
\end{equation}
where $(a - 1)^2 + b^2 >0$ since $b \neq 0$.
Hence, 
\[
\max_P\Psi\ge 0
\iff |\lambda|^6\ge N(a,b)
\iff G(a,b)\ge 0.
\]
so that we can conclude that $G(a,b)\ge 0$. Replacing $b$ by $|b|$ completes the proof.
\end{proof}


\section{Boundary of the spectral region}\label{sec:boundary}
In this section we prove that the boundary of the spectral region \(\mathcal{R}\) consist of CR and CL as presented in Figure \ref{spectral_region}. For this, we have to demonstrate that the points of CL and CR are indeed eigenvalues of realizing $4$-cycle stochastic matrices.
\begin{proposition}[Right boundary segment]\label{prop:CR}
For each $x\in[0,1]$, the matrix obtained by setting
\[
\alpha_1=\alpha_2=\alpha_3=\alpha_4=1-x
\]
has the nonreal eigenvalues $1-x\pm ix$. In particular, the upper segment
\[
CR:\qquad \lambda=1-x+ix,\qquad x\in[0,1],
\]
consists of eigenvalues of $4$-cycle stochastic matrices.
\end{proposition}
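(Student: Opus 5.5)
The plan is to reuse the circulant decomposition from the proof of Theorem~\ref{prop:real-eigenvalues}. With $\alpha_1=\alpha_2=\alpha_3=\alpha_4=1-x$ the matrix $A$ has all diagonal entries $1-x$ and all cyclic superdiagonal entries (including the corner entry $(4,1)$) equal to $x$. Hence it can be written as
\[
A=(1-x)I+xP,
\]
with $P$ the cyclic permutation matrix used earlier. The eigenvalues of $P$ are the fourth roots of unity $1,i,-1,-i$. Since $A$ is a polynomial in $P$, its eigenvalues are $(1-x)+x\omega$ with $\omega^4=1$. This gives $1$, $1-2x$, and $1-x\pm ix$.

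As a cross-check one can also use Proposition~\ref{prop:multconstraint}. With all $t_k=x$ and $z=\lambda-1$, equation \eqref{eq:zproduct-new} becomes $(z+x)^4=x^4$. So $z+x=x\omega$, and $z=-x+ix$ gives $\lambda=1-x+ix$, consistent with the first computation.

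For $x\in(0,1]$ the value $1-x+ix$ is nonreal. It lies on the line $a+b=1$ with $0\le a<1$, which exhibits every point of the segment $CR$ except its endpoints. The conjugate $1-x-ix$ is also an eigenvalue, as expected.

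The only point needing care is the range of the parameter. Definition~\ref{def:4-cycle} requires $\alpha_k\in[0,1)$, so $x\in(0,1]$ is admissible. The endpoint $x=0$ is different: it would need $\alpha_k=1$, and it gives the real eigenvalue $\lambda=1$. That eigenvalue is realized by every stochastic matrix anyway, so the point $1$ of $CR$ is covered by Theorem~\ref{prop:real-eigenvalues}. Thus the whole closed segment $CR$ consists of eigenvalues of $4$-cycle stochastic matrices.

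I expect no real obstacle. The main thing is to state explicitly how the degenerate endpoint $x=0$ is handled.
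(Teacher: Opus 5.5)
Your proof is correct and takes essentially the paper's approach: the paper works directly with $(z+x)^4=x^4$ from \eqref{eq:zproduct-new} (your cross-check) for $x\in(0,1]$ and handles $x=0$ by noting that $1$ is always an eigenvalue of a stochastic matrix, while your circulant decomposition $A=(1-x)I+xP$ is the same computation in another form. One small slip: $x\in(0,1]$ misses only the endpoint $\lambda=1$ of $CR$, not both endpoints, since $x=1$ gives the admissible parameters $\alpha_k=0$ and the eigenvalue $\lambda=i$.
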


\begin{proof}
As it is known that \(\lambda = 1\) is always an eigenvalue of a stochastic matrix, consider from now on \(x\in(0,1]\). If $\alpha_1=\alpha_2=\alpha_3=\alpha_4=1-x$, then $t_1=t_2=t_3=t_4=x$, and \eqref{eq:zproduct-new} becomes
\[
(z+x)^4=x^4.
\]
Hence $z+x=xe^{ik\pi/2}$ for $k=0,1,2,3$. The nonreal solutions correspond to $k=1,3$, namely
\[
z+x=\pm ix.
\]
Since $z=\lambda-1$, this gives $\lambda=1-x\pm ix$.
\end{proof}

For the left boundary curve $G(a,b)=0$, realizing matrices are presented in \cite{ran2024nonnegative}. For completeness, we include a formal proof in Proposition \ref{prop:CL}.

\begin{definition}\label{RealizingMatrixLeftBound}
For $\alpha\in[0,1)$, define
\[
A_L(\alpha)=
\begin{pmatrix}
\alpha & 1-\alpha & 0 & 0\\
0 & 0 & 1 & 0\\
0 & 0 & 0 & 1\\
1 & 0 & 0 & 0
\end{pmatrix}.
\]
\end{definition}

\begin{proposition}[Left boundary curve]\label{prop:CL}
Let
\(
H:=\{z\in\mathbb C:\operatorname{Im}z>0\}
\)
and define
\[
C_L^+:=\{a+ib:b>0,\ a\ge 0,\ a+b\le 1,\ G(a,b)=0\}.
\]
For $\alpha\in[0,1)$, let $A_L(\alpha)$ be as in Definition \ref{RealizingMatrixLeftBound}. Then
\[
\bigcup_{\alpha\in[0,1)}
\left(\sigma(A_L(\alpha))\cap H\right)=C_L^+.
\]
Equivalently, the upper-half-plane nonreal eigenvalues of the matrices
$A_L(\alpha)$, $\alpha\in[0,1)$, are exactly the points of the left
boundary portion $C_L^+$.

More explicitly, if $\lambda=a+ib\in C_L^+$, then
$\lambda\in\sigma(A_L(\alpha))$ for
\[
\alpha
=1-\frac{b^2-3a^2}{2a+1}
=\frac{3a^2+2a+1-b^2}{2a+1}.
\]
The point $i$ is realized for $\alpha=0$. The point $0$ is not realized
for any $\alpha\in[0,1)$, but it is the limit of the upper-half-plane
nonreal eigenvalue as $\alpha\uparrow 1$.
\end{proposition}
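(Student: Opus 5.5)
The approach is to compute the characteristic polynomial of $A_L(\alpha)$ directly and reduce the condition ``$\lambda$ is a nonreal eigenvalue for some real $\alpha$'' to the vanishing of an imaginary part. Expanding $\det(\lambda I-A_L(\alpha))$ along the cycle, the same way as in Proposition~\ref{prop:multconstraint} with $\alpha_2=\alpha_3=\alpha_4=0$, gives
\[
\lambda^3(\lambda-\alpha)=1-\alpha ,
\]
which can be rearranged as $\lambda^4-1=\alpha(\lambda^3-1)$. For $\lambda\neq 1$ we may cancel the factor $\lambda-1$ and obtain
\[
\lambda^3+\lambda^2+\lambda+1=\alpha(\lambda^2+\lambda+1).
\]
For nonreal $\lambda$ we have $w:=\lambda^2+\lambda+1\neq0$, because the roots of $w$ are the primitive cube roots of unity and these do not satisfy the first equation. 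The relation is therefore equivalent to
\[
\alpha=\lambda+\frac{1}{w}.
\]
Consequently, a nonreal $\lambda=a+ib$ with $b>0$ is an eigenvalue of $A_L(\alpha)$ for some real $\alpha$ if and only if $\operatorname{Im}(\lambda+1/w)=0$, and then $\alpha$ is given by this formula.

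Next we compute this condition explicitly. We have
\[
w=(a^2-b^2+a+1)+ib(2a+1),\qquad \operatorname{Im}(1/w)=-\frac{b(2a+1)}{|w|^2}.
\]
Since $b>0$, the condition $\operatorname{Im}(\lambda+1/w)=0$ becomes
\[
|w|^2=(a^2-b^2+a+1)^2+b^2(2a+1)^2=2a+1 .
\]
I expect a routine expansion to show that $|w|^2-(2a+1)$ equals $G(a,b)$; this identity is the central computation of the proof. Substituting $|w|^2=2a+1$ then gives
\[
\alpha=a+\frac{a^2-b^2+a+1}{2a+1}=\frac{3a^2+2a+1-b^2}{2a+1}=1-\frac{b^2-3a^2}{2a+1},
\]
which is the stated formula.

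\emph{Inclusion $\supseteq$.} Take $\lambda\in C_L^+$. By the previous paragraph $\lambda$ is an eigenvalue of $A_L(\alpha)$ for the displayed $\alpha$, so it remains to check that $\alpha\in[0,1)$.
\begin{itemize}
\item The bound $\alpha<1$ is equivalent to $b^2>3a^2$. Step~1 of Lemma~\ref{lem:GimpliesTight} shows that $G\le0$ forces $0\le a\le 1/6$. Step~3 of that lemma gives $b^2>3a^2$ for $0<a<1/6$. The case $a=0$ is trivial, and $a=1/6$ forces $b=\sqrt{11}/6$, which can be checked directly.
\item The bound $\alpha\ge0$ is equivalent to $b^2\le 3a^2+2a+1$. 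This holds because $a+b\le 1$ and $a\ge0$ give $b\le1$.
\end{itemize}

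\emph{Inclusion $\subseteq$.} Take any $\alpha\in[0,1)$ and any eigenvalue of $A_L(\alpha)$ lying in $H$. By the computation above it satisfies $G=0$. Moreover, $A_L(\alpha)$ is a $4$-cycle matrix with $\alpha_1=\alpha$ and $\alpha_2=\alpha_3=\alpha_4=0$. Lemma~\ref{lem:a-range} therefore gives $a\ge0$, and Proposition~\ref{prop:right-boundary-necessary} gives $a+b\le1$. Hence the eigenvalue lies in $C_L^+$.

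\emph{The special points.}
\begin{itemize}
\item For $\alpha=0$ the cubic $\lambda^3+\lambda^2+\lambda+1$ factors as $(\lambda+1)(\lambda^2+1)$, so $i$ is realized.
\item As $\alpha\uparrow1$, the cubic $\lambda^3+(1-\alpha)(\lambda^2+\lambda+1)$ tends to $\lambda^3$. By continuity of the roots, its nonreal roots tend to $0$.
\item The point $0$ has $b=0$, so it is not in $H$ and cannot be realized as a nonreal eigenvalue.
\end{itemize}

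The main obstacle is the algebraic identity $|w|^2-(2a+1)=G(a,b)$ together with the verification that $\alpha\in[0,1)$ on all of $C_L^+$. Both are finite computations that can reuse the inequality $b^2>3a^2$ already established in Lemma~\ref{lem:GimpliesTight}.
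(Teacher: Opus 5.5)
Your proof is correct, but the route differs from the paper's. Both proofs start from $p_\alpha(\lambda)=(\lambda-1)\bigl(\lambda^3+(1-\alpha)(\lambda^2+\lambda+1)\bigr)$.
\begin{itemize}
\item \emph{Deriving $G=0$.} The paper splits $q_t(a+ib)$ into real and imaginary parts. It solves the imaginary equation for $t=1-\alpha$ and substitutes into the real one, obtaining $-G(a,b)$. You instead write $\alpha=\lambda+1/w$ and impose reality. The identity you anticipated does hold: with $s=b^2$, both $|w|^2-(2a+1)$ and $G(a,b)$ equal $s^2+s(2a^2+2a-1)+(a^2+a)^2+2a^2$. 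Your version explains the curve and the formula for $\alpha$ in one stroke: $G=0$ says exactly that $\lambda+1/(\lambda^2+\lambda+1)$ is real.
\item \emph{Inclusion $\subseteq$.} The paper stays self-contained. It shows $q_t$ is strictly increasing on $\mathbb R$, places its real root in $[-1,-t]$, and uses Vieta to get $a\ge0$ and $a+b\le1$. You observe that $A_L(\alpha)$ is itself a $4$-cycle matrix and invoke Lemma~\ref{lem:a-range} and Proposition~\ref{prop:right-boundary-necessary}. This is legitimate, since those results are proved independently in Section~\ref{sec:necessary}.
\item \emph{The bound $\alpha<1$.} The paper argues by contradiction with $s=ca^2$, $c\in[0,3]$. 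You recycle Steps~1 and~3 of Lemma~\ref{lem:GimpliesTight}.
\end{itemize}
Your proof is shorter. The paper's keeps Proposition~\ref{prop:CL} independent of the convexity machinery, and it shows as a by-product that each $A_L(\alpha)$ has exactly one eigenvalue in $H$.

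That by-product is what your treatment of the special points lacks. To say that ``the'' upper-half-plane eigenvalue tends to $0$ as $\alpha\uparrow1$, you need one to exist for every $\alpha$. This follows at once from $q_t'(x)=3x^2+2tx+t>0$ (its discriminant is $4t(t-3)<0$), so $q_t$ has a single real root and one conjugate pair.

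Also, ``$0$ is not realized'' means $0\notin\sigma(A_L(\alpha))$, not merely $0\notin H$. Your own equation $\lambda^3(\lambda-\alpha)=1-\alpha$ gives this immediately, since at $\lambda=0$ it reads $0=1-\alpha\neq0$.

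Finally, in the inclusion $\supseteq$ your nonvanishing argument for $w$ (which applies to eigenvalues) does not apply. For $\lambda\in C_L^+$ you need $|w|^2=2a+1\ge1$, which uses $a\ge0$. This matters because $e^{2\pi i/3}$ lies on $G=0$ in $H$ while $w$ vanishes there.
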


\begin{proof}
Put
\(
t:=1-\alpha.
\)
Then $\alpha\in[0,1)$ is equivalent to $t\in(0,1]$. The characteristic
polynomial of $A_L(\alpha)$ is
\[
p_\alpha(\lambda)=\lambda^4-\alpha\lambda^3+\alpha-1.
\]
In terms of $t$, this factors as
\[
p_\alpha(\lambda)
=(\lambda-1)\bigl(\lambda^3+t(\lambda^2+\lambda+1)\bigr).
\]
Set
\[
q_t(\lambda):=\lambda^3+t(\lambda^2+\lambda+1).
\]
Since every $\lambda\in H$ satisfies $\lambda\ne1$, we have
\[
\lambda\in\sigma(A_L(\alpha))\cap H
\iff q_t(\lambda)=0
\]
for some $t\in(0,1]$.

Write $\lambda=a+ib$, $b>0$, $s:=b^2$. A direct expansion gives
\[
q_t(a+ib)
=
\left[
a^3-3as+t(a^2-s+a+1)
\right]
+ib\left[
3a^2-s+t(2a+1)
\right].
\]
Thus $q_t(a+ib)=0$ is equivalent to \(\operatorname{Im}(q_t(a+bi))=0\) and \(\operatorname{Re}(q_t(a+bi))=0\):
\begin{align}
3a^2-s+t(2a+1)&=0, \label{eq:imag}\\
a^3-3as+t(a^2-s+a+1)&=0. \label{eq:real}
\end{align}
We first prove
\[
\bigcup_{\alpha\in[0,1)}
\left(\sigma(A_L(\alpha))\cap H\right)\subseteq C_L^+.
\]
Assume that $q_t(\lambda)=0$ for some $t\in(0,1]$, with
$\lambda=a+ib\in H$. The real polynomial
\[
q_t(x)=x^3+tx^2+tx+t
\]
has derivative
\[
q_t'(x)=3x^2+2tx+t.
\]
The discriminant of $q_t'$ is
\[
4t(t-3)<0,
\]
and the leading coefficient is positive. Hence $q_t'(x)>0$ for every
real $x$. Therefore $q_t$ is strictly increasing on $\mathbb R$ and has
exactly one real root. Denote this root by $r$.

Now
\[
q_t(-1)=t-1\le0,
\qquad
q_t(-t)=t(1-t)\ge0.
\]
Since $q_t$ is strictly increasing, we know
\(
r\in[-1,-t].
\)
Set
\(
u:=-r.
\)
Then
\(
u\in[t,1].
\)
The three roots of $q_t$ are $r,\lambda,\overline{\lambda}$. By Vieta's
formulas,
\[
r+\lambda+\overline{\lambda}=-t,
\]
so
\[
r+2a=-t.
\]
Therefore
\[
a=\frac{-t-r}{2}=\frac{u-t}{2}\ge0.
\]
Also, Vieta's formula for the product of the roots gives
\[
r\lambda\overline{\lambda}=-t,
\]
hence
\[
r(a^2+b^2)=-t.
\]
Since $r=-u$,
\[
a^2+b^2=\frac{t}{u}.
\]
We next prove $a+b\le1$. Using
\[
a=\frac{u-t}{2},
\qquad
a^2+b^2=\frac{t}{u},
\]
one obtains
\[
(1-a)^2-b^2
=(1-a)^2+a^2-\frac{t}{u}
=
\frac{(t-u)(tu-u^2+2u-2)}{2u}.
\]
Since $u\in[t,1]$, we have $t-u\le0$. Define
\[
\phi(u):=tu-u^2+2u-2.
\]
Then
\[
\phi'(u)=t+2-2u\ge t>0
\]
for $u\in[t,1]$, so $\phi$ is increasing on $[t,1]$. Hence
\(
\phi(u)\le\phi(1)=t-1\le0.
\)
Thus both factors in the numerator are nonpositive, and $u>0$, so
\[
(1-a)^2-b^2\ge0.
\]
Moreover,
\[
a=\frac{u-t}{2}\le\frac{1-t}{2}<1,
\]
so $1-a>0$. Since $b>0$, the inequality above implies
\[
b\le1-a,
\]
and therefore
\[
a+b\le1.
\]
It remains to prove $G(a,b)=0$. Since $a\ge0$, we have $2a+1>0$. From
\eqref{eq:imag},
\[
t=\frac{s-3a^2}{2a+1}.
\]
Substituting this value of $t$ into the real equation \eqref{eq:real}
gives
\[
0
=
(2a+1)\left[
a^3-3as+
\frac{s-3a^2}{2a+1}(a^2-s+a+1)
\right].
\]
Expanding the right-hand side yields
\[
(2a+1)\left[
a^3-3as+
\frac{s-3a^2}{2a+1}(a^2-s+a+1)
\right]
=
-\left[(s+a^2+a)^2+2a^2-s\right].
\]
Since $s=b^2$, this is precisely $-G(a,b)$. Therefore
\[
G(a,b)=0.
\]
We have shown that
\[
b>0,\qquad a\ge0,\qquad a+b\le1,\qquad G(a,b)=0,
\]
so $\lambda\in C_L^+$.

We now prove the reverse inclusion. Let
\[
\lambda=a+ib\in C_L^+.
\]
Thus
\[
b>0,\qquad a\ge0,\qquad a+b\le1,\qquad G(a,b)=0.
\]
Again set $s=b^2$, and define
\[
t:=\frac{s-3a^2}{2a+1}.
\]
We claim first that $t\in(0,1]$.

If $a=0$, then
\[
G(0,b)=b^4-b^2=b^2(b^2-1).
\]
Since $b>0$ and $a+b=b\le1$, this forces $b=1$. Hence $s=1$ and
\[
t=\frac{s}{1}=1.
\]

Assume now that $a>0$. We prove that $s>3a^2$. Suppose, to the contrary,
that $s\le3a^2$. Then
\[
s=ca^2
\]
for some $c\in[0,3]$. Substitution into $G$ gives
\[
G(a,b)
=
(ca^2+a^2+a)^2+2a^2-ca^2
=
a^2\left[(1+(c+1)a)^2+2-c\right].
\]
The expression within the bracket is strictly positive for every $c\in[0,3]$. Indeed, if
$c<3$, then
\[
(1+(c+1)a)^2+2-c>3-c>0,
\]
while for $c=3$,
\[
(1+4a)^2-1>0.
\]
Thus $G(a,b)>0$, contradicting $G(a,b)=0$. Therefore
\[
s>3a^2.
\]
Since $2a+1>0$, this gives $t>0$.
It remains to show $t\le1$. This is equivalent to
\[
s-3a^2\le2a+1.
\]
Since $a+b\le1$, we have $b\le1-a$, and hence
\[
s=b^2\le(1-a)^2.
\]
Therefore
\[
s-3a^2
\le
(1-a)^2-3a^2
=
1-2a-2a^2
\le
1+2a.
\]
Thus $t\le1$. Hence $t\in(0,1]$.
Define
\[
\alpha:=1-t.
\]
Then $\alpha\in[0,1)$. We now show that
$\lambda\in\sigma(A_L(\alpha))$. By the definition of $t$,
\[
3a^2-s+t(2a+1)=0,
\]
so the imaginary part of $q_t(a+ib)$ vanishes. For the real part, using
the same substitution as above,
\[
(2a+1)\operatorname{Re}(q_t(a+ib))=-G(a,b).
\]
Since $G(a,b)=0$, the real part also vanishes. Hence
\[
q_t(a+ib)=0.
\]
Consequently,
\[
p_\alpha(\lambda)=(\lambda-1)q_t(\lambda)=0.
\]
Thus
\[
\lambda\in\sigma(A_L(\alpha)).
\]
This proves
\[
C_L^+\subseteq
\bigcup_{\alpha\in[0,1)}
\left(\sigma(A_L(\alpha))\cap H\right).
\]
Combining the two inclusions gives the asserted equality.

Finally, we verify the endpoint statement. If $\alpha=0$, then $t=1$, and
\[
q_1(\lambda)=\lambda^3+\lambda^2+\lambda+1
=(\lambda+1)(\lambda^2+1).
\]
Thus the upper-half-plane nonreal root is $\lambda=i$.

Now let $\alpha\uparrow1$, equivalently $t\downarrow0$. For each
$t\in(0,1]$, the polynomial $q_t$ has exactly one real root and one
conjugate pair of nonreal roots, as shown above. Let
\[
\lambda_t=a_t+ib_t,\qquad b_t>0,
\]
be its upper-half-plane nonreal root, and let $r_t$ be its real root. We
already proved that
\[
r_t\in[-1,-t].
\]
If $t\downarrow0$, then every convergent subsequence of $r_{t}$ has a
limit $r\in[-1,0]$. Taking the limit in
\[
r_{t}^3+t(r_{t}^2+r_{t}+1)=0
\]
gives
\[
r^3=0,
\]
so $r=0$. Therefore
\[
r_t\to0.
\]
Writing $u_t:=-r_t$, Vieta's formula gives
\[
a_t=\frac{u_t-t}{2}\to0.
\]
Equation (\ref{eq:imag}) for $q_t(\lambda_t)=0$ gives
\[
b_t^2=3a_t^2+t(2a_t+1)\to0.
\]
Hence
\[
\lambda_t\to0.
\]
On the other hand,
\[
p_\alpha(0)=\alpha-1\ne0
\]
for every $\alpha\in[0,1)$. Thus $0$ is not realized for any admissible
$\alpha$, but it is a limit point as $\alpha\uparrow1$.
\end{proof}

\section{Every interior point of $\mathcal{R}$ is realized} \label{sec:converse}
In Lemma \ref{lem:starconvex} we prove that the set of eigenvalues of $4$-cycle stochastic matrices is star-convex with respect to $1$. This star-convexity property is then helpful in writing interior points in terms of boundary points in Theorem \ref{thm:interior}.
\begin{lemma}[Star-convexity with respect to \(1\)]\label{lem:starconvex}
Assume that \(\mu\in\sigma(A(\alpha_1,\alpha_2,\alpha_3,\alpha_4))\), where
\(\alpha_1,\alpha_2,\alpha_3,\alpha_4\in[0,1)\). Then for every \(l\in(0,1]\),
\[
\lambda_l:=(1-l)+l\mu
\]
is an eigenvalue of a \(4\)-cycle stochastic matrix with parameters in
\([0,1)\).
\end{lemma}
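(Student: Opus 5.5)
The plan is to realize $\lambda_l$ directly as an eigenvalue of an explicit convex combination of matrices, namely
\[
B:=(1-l)I+l\,A(\alpha_1,\alpha_2,\alpha_3,\alpha_4).
\]
If $Av=\mu v$ with $v\neq 0$, then $Bv=(1-l)v+l\mu v=\lambda_l v$. So $\lambda_l\in\sigma(B)$, and what remains is to check that $B$ is again a $4$-cycle stochastic matrix with parameters in $[0,1)$.

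First I will compute the entries of $B$. The diagonal entries are
\[
\beta_k:=(1-l)+l\alpha_k=1-l(1-\alpha_k),
\]
and the off-diagonal entries in the cycle positions $(1,2),(2,3),(3,4),(4,1)$ are $l(1-\alpha_k)=1-\beta_k$. All other entries are zero. Hence $B=A(\beta_1,\beta_2,\beta_3,\beta_4)$ has exactly the pattern of Definition~\ref{def:4-cycle}, and its rows sum to one.

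Next I will verify the parameter range. Since $l\in(0,1]$ and $1-\alpha_k\in(0,1]$, the product $l(1-\alpha_k)$ lies in $(0,1]$. Therefore $\beta_k\in[0,1)$, which is what is required.

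An alternative route uses the $t_k,z$ notation of \eqref{eq:zproduct-new}. If $z=\mu-1$ satisfies \eqref{eq:zproduct-new} with parameters $t_k$, then $lz=\lambda_l-1$ satisfies it with parameters $lt_k\in(0,1]$, because both sides scale by the factor $l^4$. This gives the same conclusion, and also covers real $\mu$ directly through the characteristic polynomial, since $\prod(z+t_k)-\prod t_k$ is exactly the characteristic polynomial in $z$.

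I do not expect any real obstacle here. The only point needing care is the endpoint bookkeeping: confirming that $\beta_k<1$ strictly, which follows from $l>0$ and $\alpha_k<1$, and that $\beta_k\ge 0$, which follows from $l\le 1$.
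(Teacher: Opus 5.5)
Your proposal is correct and follows the paper's proof: the paper likewise sets $\alpha_{k,l}=(1-l)+l\alpha_k$, notes $1-\alpha_{k,l}=l(1-\alpha_k)$ so that $A(\alpha_{1,l},\dots,\alpha_{4,l})=(1-l)I+lA$, transfers the eigenvector, and checks $\alpha_{k,l}\in[0,1)$ exactly as you do. Your alternative scaling argument via $\lambda_l-1=l(\mu-1)$ and $t_k\mapsto lt_k$ in \eqref{eq:zproduct-new} is also valid, but it is not needed.
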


\begin{proof}
Define
\[
\alpha_{k,l}:=(1-l)+ l \alpha_k,\qquad k=1,2,3,4.
\]
Then
\[
1-\alpha_{k,l}=l(1-\alpha_k),
\]
and therefore
\[
A(\alpha_{1,l},\alpha_{2,l},\alpha_{3,l},\alpha_{4,l})
=(1-l)I+lA(\alpha_1,\alpha_2,\alpha_3,\alpha_4).
\]
If \(Av=\mu v\), then
\[
\big((1-l)I+lA\big)v=\big((1-l)+l\mu\big)v.
\]
Since \(l>0\) and \(\alpha_k<1\), we have \(\alpha_{k,l}<1\), and also \(\alpha_{k,l}\ge 0\).Thus \(\lambda_l=(1-l)+l\mu\) is an eigenvalue of the $4$-cycle stochastic matrix $A(\alpha_{1,l},\alpha_{2,l},\alpha_{3,l},\alpha_{4,l})$. 
\end{proof}

\begin{lemma}[Geometric intersection with the left boundary]\label{Geometric intersection with the left boundary}
Let
\(
\lambda = a + ib
\)
satisfy
\[
b > 0, \qquad 0 \le a < 1, \qquad a+b < 1, \qquad G(a,b) > 0.
\]
Then there exists a point
\(
\mu
\)
on the upper-half-plane branch of the algebraic curve
\[
G(a,b)=0
\]
such that
\[
\lambda = (1-\ell)+\ell\mu
\]
for some \(\ell \in (0,1)\).

\begin{proof}
Consider the ray starting at \(1\) and passing through \(\lambda\):
\[
\rho(t)
=
1+t(\lambda-1)
=
1-t(1-a)+itb,
\qquad t\ge0.
\]
Clearly,
\(
\rho(1)=\lambda.
\) Let
\(
t_0=\frac{1}{1-a}>1,
\)
then
\(
\operatorname{Re}(\rho(t_0))=0,
\)
and
\(
\rho(t_0)=i\frac{b}{1-a}.
\)
Since
\(
a+b<1,
\)
we obtain
\(
0<\frac{b}{1-a}<1.
\) Therefore,
\[
G\left(0,\frac{b}{1-a}\right)
=
\left(\frac{b}{1-a}\right)^4
-
\left(\frac{b}{1-a}\right)^2
<0.
\]
On the other hand,
\[
G(\rho(1))=G(a,b)>0.
\]
By continuity, there exists
\(
t^*\in(1,t_0)
\)
such that
\[
G(\rho(t^*))=0.
\]
Define
\(
\mu:=\rho(t^*)
\) and since \(t^*>1\), we may write
\[
\mu
=
1+t^*(\lambda-1),
\]
which rearranges to
\[
\lambda
=
\left(1-\frac{1}{t^*}\right)
+\frac{1}{t^*}\mu.
\]
Setting
\(
\ell:=\frac{1}{t^*}\in(0,1),
\) we obtain
\[
\lambda=(1-\ell)+\ell\mu.
\]
Finally, for every \(t\in[1,t_0]\),
\[
\operatorname{Re}(\rho(t))
+
\operatorname{Im}(\rho(t))
=
1-t(1-a-b)
<1.
\]
Hence \(\mu\) lies on the relevant left boundary branch \(G(a,b)=0\) in the upper half-plane.
\end{proof}
\end{lemma}

\begin{theorem}[Interior realizability]\label{thm:interior}
Let \(\lambda=a+ib\) satisfy
\[
b>0,\qquad 0\le a\le 1,\qquad a+b<1,\qquad G(a,b)>0.
\]
Then there exist \(\alpha_1,\alpha_2,\alpha_3,\alpha_4\in[0,1)\) such that
\[
\lambda\in\sigma(A(\alpha_1,\alpha_2,\alpha_3,\alpha_4)).
\]
\end{theorem}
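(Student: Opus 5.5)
The plan is to combine Lemma~\ref{Geometric intersection with the left boundary}, Proposition~\ref{prop:CL} and Lemma~\ref{lem:starconvex}. Note first that the hypothesis $0\le a\le 1$ together with $b>0$ and $a+b<1$ forces $a<1$. So $\lambda$ satisfies exactly the hypotheses of Lemma~\ref{Geometric intersection with the left boundary}.

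\textbf{Step 1.} Applying that lemma, I obtain a point $\mu=\rho(t^*)$, $t^*\in(1,t_0)$, on the upper-half-plane branch of $G=0$, together with $\ell\in(0,1)$ such that $\lambda=(1-\ell)+\ell\mu$.

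\textbf{Step 2.} Next I verify that $\mu\in C_L^+$, i.e.\ $\operatorname{Im}\mu>0$, $\operatorname{Re}\mu\ge 0$, $\operatorname{Re}\mu+\operatorname{Im}\mu\le 1$ and $G(\mu)=0$. The imaginary part equals $t^*b>0$. The real part is $1-t^*(1-a)$, which is positive because $t^*<t_0=1/(1-a)$. The sum condition $1-t^*(1-a-b)<1$ was already checked in the proof of the lemma. Finally $G(\mu)=0$ holds by the choice of $t^*$. This verification is the only point needing care: the lemma states only that $\mu$ lies on ``the relevant branch'', so I spell out the membership in $C_L^+$ explicitly rather than rely on that phrase.

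\textbf{Step 3.} By Proposition~\ref{prop:CL}, $\mu\in\sigma(A_L(\alpha))$ with
\[
\alpha=\frac{3a_\mu^2+2a_\mu+1-b_\mu^2}{2a_\mu+1}\in[0,1),
\]
where $\mu=a_\mu+ib_\mu$. The matrix $A_L(\alpha)$ is the $4$-cycle matrix $A(\alpha,0,0,0)$, and all its parameters lie in $[0,1)$.

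\textbf{Step 4.} Lemma~\ref{lem:starconvex}, applied with $l=\ell\in(0,1]$, then shows that $\lambda=(1-\ell)+\ell\mu$ is an eigenvalue of
\[
A\bigl((1-\ell)+\ell\alpha,\;1-\ell,\;1-\ell,\;1-\ell\bigr),
\]
and these parameters all lie in $[0,1)$. This gives the explicit realizing matrix and completes the proof. No step is a serious obstacle; the main subtlety is the endpoint bookkeeping in Step~2, in particular confirming that $\mu\neq 0$, which is the one point of the curve that Proposition~\ref{prop:CL} does not realize. This follows from $\operatorname{Im}\mu=t^*b>0$.
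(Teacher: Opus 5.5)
Your proposal is correct and follows the paper's own proof: the geometric-intersection lemma supplies \(\mu\) on the left boundary curve and \(\ell\in(0,1)\), the left-boundary proposition realizes \(\mu\) as an eigenvalue of \(A_L(\alpha)\), and the star-convexity lemma transfers this to \(\lambda\). Your additional checks are useful bookkeeping that the paper leaves implicit: the explicit verification that \(\mu\in C_L^+\) (in particular \(\operatorname{Re}\mu>0\), which follows from \(t^*<t_0\)), the observation that \(a<1\) is forced, and the explicit realizing matrix \(A((1-\ell)+\ell\alpha,1-\ell,1-\ell,1-\ell)\).
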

\begin{proof}
Let
\(
\lambda = a+ib
\)
satisfy
\[
b>0, \qquad 0\le a<1, \qquad a+b<1, \qquad G(a,b)>0.
\]
By Lemma \ref{Geometric intersection with the left boundary}, there exists a point \(\mu\) on the upper-half-plane branch of
\(
G(a,b)=0
\)
and a scalar \(\ell\in(0,1)\) such that
\[
\lambda=(1-\ell)+\ell\mu.
\]
By Proposition \ref{prop:CL}, the point \(\mu\) is realized as an eigenvalue of a matrix
\(
A_L(\alpha)
\)
with \(\alpha\in[0,1)\). Applying Lemma \ref{lem:starconvex} therefore shows that \(\lambda\) is realized by
\[
(1-\ell)I+\ell A_L(\alpha),
\]
which is again a 4-cycle stochastic matrix with parameters in \([0,1)\).
\end{proof}

\section{Conclusions and further research questions}

The proof of Theorem \ref{thm:main} establishes the complete description of the nonreal spectral region of the 4-cycle stochastic matrices. The necessity part follows from Lemma \ref{lem:a-range}, Proposition \ref{prop:right-boundary-necessary}, and Proposition \ref{prop:G-condition}, which show that every nonreal eigenvalue must satisfy the inequalities \(0 \leq a < 1\), \(a+b\leq 1\), and \(G(a,b)\geq 0\). Conversely, Proposition \ref{prop:CR}, Proposition~\ref{prop:CL}, and Theorem \ref{thm:interior} show that the right boundary segment, the left boundary curve, and every point in the strict interior of the nonreal eigenvalue region \(\mathcal{R}\) are realized. Since all matrices under consideration are real, the lower half-plane follows by conjugation symmetry. Combining these results yields Theorem \ref{thm:main}. Together with Theorem \ref{prop:real-eigenvalues}, which characterizes the real spectral region as \([-1,1]\), this proves the Main Theorem \ref{MainTheorem}.

The spectral regions of \(n \times n\) stochastic matrices in full generality are described by the Karpelevich theorem \cite{Karpelevich}. For subclasses of stochastic matrices, however, such a complete description is far from known. The present paper settles this problem for the 4-cycle stochastic matrices, but the corresponding question for \(n\)-cycle stochastic matrices remains open for \(n \geq 5\). Extending the ideas developed here to arbitrary \(n \geq 5\) would require a much deeper understanding of the geometry of the associated argument parametrizations, convexity constraints, and boundary realizations. In particular, disseminating the methods of the present paper from \(n=4\) to general \(n \geq 5\) is a substantial and highly nontrivial challenge. Developing such a general theory for spectral regions of \(n\)-cycle stochastic matrices is therefore a natural and important direction for future research.

\section{Declaration of generative AI and AI-assisted technologies in the manuscript preparation process}
During the preparation of this work the author(s) used ChatGPT-5.2 (Thinking) as an AI-assisted research copilot for the generation, critique, and repair of candidate mathematical arguments, as well as for exploring proof strategies and alternative approaches to the studied conjecture. After using this tool, the author(s) reviewed and edited the content as needed and take(s) full responsibility for the content of the published article.

\bibliographystyle{plain}
\bibliography{references}







\end{document}